%% file: main.tex
\documentclass[pdflatex,sn-mathphys-num]{sn-jnl}
\usepackage{graphicx}
\usepackage{multirow}
\usepackage{amsmath,amssymb,amsfonts}
\usepackage{amsthm}
\usepackage{mathrsfs}
\usepackage[title]{appendix}
\usepackage{xcolor}
\usepackage{textcomp}
\usepackage{manyfoot}
\usepackage{booktabs}
\usepackage{algorithm}
\usepackage{algorithmicx}
\usepackage{algpseudocode}
\usepackage{listings}
\usepackage{makecell}
\usepackage[all,cmtip]{xy}
\usepackage{mathtools}
\usepackage{microtype}
\usepackage{latexsym}
\usepackage{yfonts}
\usepackage{graphics}
\usepackage{array}
\usepackage{enumitem}
\usepackage{float}
\input{init.tex}

\begin{document}

\title[A penalised Saito functional for heuristic search of free line arrangements]{A penalised Saito functional for heuristic search of free line arrangements}

\author*{\fnm{Tom\'as} \sur{S. R. Silva}}\email{tomas@ime.unicamp.br}

\affil{\orgdiv{Instituto de Matemática, Estatística e Computação Científica (IMECC)}, \orgname{Universidade Estadual de Campinas (UNICAMP)}, \orgaddress{ \postcode{13083-859}, \city{Campinas}, \state{S\~ao Paulo}, \country{Brazil}}}

\abstract{
\unboldmath
We introduce the penalised Saito functional $\mathfrak S_{\lambda,\beta}(\mathcal{A};d_1,d_2)$ for a reduced arrangement $\mathcal{A}$ of $n$ lines and a prescribed pair $d_1+d_2=n-1$. It measures the alignment of a candidate Saito determinant with the defining polynomial while penalising the failure of the candidate derivations to be logarithmic. We prove that the functional takes values in $[0,1]$, vanishes exactly when $\mathcal{A}$ is free with exponents $(1,d_1,d_2)$, and lies strictly between $0$ and $1$ otherwise. For fixed $(d_1,d_2)$, it is upper semicontinuous on the reduced configuration space, continuous at arrangements free with the prescribed pair, and converges as $\lambda\to\infty$ to the corresponding binary freeness test. 

We use a numerical approximation of this functional, together with a small $b_2$-shell term, to guide fixed-cardinality line-replacement searches over $\mathbb{Q}$ and selected quadratic extensions. Numerical values are used only to select candidates; every reported arrangement is certified in exact arithmetic using Saito's criterion. At the current snapshot, the certified database contains $6{,}146$ representatives with distinct Weisfeiler--Leman fingerprints and cardinalities up to $n=28$. Among them, $3{,}012$ have multiplicity gap $\epsilon(\mathcal{A})=d_1-m(\mathcal{A})\geq2$, including lower-bound-extremal examples with $\epsilon=7$. These non-supersolvable arrangements provide test cases for studying realisation spaces and the persistence of freeness among realisations of the same intersection lattice, in connection with Terao's conjecture.
}

\keywords{free line arrangements, hyperplane arrangements, heuristic search.}

\pacs[MSC Classification]{(Primary)~32S22, 52C35\,; (Secondary)~68T20}

\maketitle

\newpage

\section{Introduction}
\label{sec:introduction}

Hyperplane arrangements lie at a rich intersection of algebraic geometry, combinatorics, and singularity theory. Among the many properties an arrangement may possess, \emph{freeness} occupies a central role. Introduced by Saito~\cite{Saito1980}, freeness is defined in terms of the module of logarithmic derivations associated with the defining polynomial of the arrangement: an arrangement is free when this module splits as a direct sum of graded components of prescribed degrees. This splitting condition is closely related to the existence of polynomial vector fields satisfying divisibility constraints along the arrangement; see, for instance,~\cite{Orlik1992,Dimca2017,Yoshinaga2014,Jardim2024}.

Despite the simplicity of its definition, freeness remains difficult to characterise, detect, and construct systematically. For arrangements with a fixed intersection lattice, the locus of free realisations is Zariski open in the corresponding realisation space~\cite{Yuzvinsky1993}. Terao's conjecture~\cite{Terao80} predicts that this locus is either empty or the entire realisation space. Indeed, a nonempty proper free locus would yield two arrangements with the same intersection lattice but different freeness behaviour, and hence a counterexample to the conjecture. Even for line arrangements in $\mathbb P^2$, Terao's conjecture remains widely open, and much recent work has focused on clarifying the relation between combinatorial data and freeness-related algebraic invariants, such as the module of logarithmic derivations and the minimal degree of Jacobian relations
\cite{Schenck2012,Dimca17a,Dimca2017b,DIMCA2016,Marchesi_2023,abe2025additiontheoremszieglerpairs,Dimca2026freelinearrangementslow,kuhne2024numericalteraosconjectureziegler,Kuhne_2024,pokora2024construction,Pokora_2024}.

From both the computational and theoretical viewpoints, constructing line arrangements with prescribed algebraic properties is a difficult problem. The configuration space grows rapidly with the number of lines, while freeness is a highly non-generic property: free arrangements appear comparatively sparsely within the full space of configurations. In practice, explicit examples are usually obtained through geometric insights, addition--deletion arguments, or symbolic computations tailored to specific families; see, for example,~\cite{Dimca2017b,Abe2020,pokora2024construction,Kabat2018,Mu2015}. On the computational side, large-scale investigations of realisable rank-three matroids have illustrated both the usefulness and the limitations of exhaustive enumeration~\cite{Barakat2021}. A search objective tied directly to a necessary-and-sufficient freeness criterion could therefore provide an additional source of examples for the theory.

This is precisely the viewpoint adopted in this paper. Rather than treating freeness only as a property to be checked \emph{after} an arrangement has been constructed, we introduce a bounded functional that can guide the
construction process itself.

The starting point is Saito's criterion
(Theorem~\ref{thm:Saito-criterion}), which requires logarithmic derivations of degrees $d_1$ and $d_2$ whose Saito determinant is a nonzero scalar multiple of the defining polynomial. When $d_1+d_2=n-1$, Lemma~\ref{lem:divisibility-obstruction} shows that proportionality to the defining polynomial is automatic for every exact logarithmic pair; the remaining distinction is whether the determinant vanishes. This suggests relaxing the logarithmicity conditions themselves. We therefore work in the fixed ambient spaces of derivations of the prescribed degrees, measure their failure to be logarithmic by canonical residuals, and combine this penalty with the determinant condition to obtain a graded functional: the \emph{penalised Saito functional} (Definition~\ref{def:saito-functional})
\[
\mathfrak S_{\lambda,\beta}(\mathcal{A};d_1,d_2),
\]
defined in Section~\ref{sec:penalised-saito-functional} for every reduced arrangement $\mathcal{A}$, every prescribed pair $d_1+d_2=n-1$, every $\lambda>0$, and every $0<\beta<1$. The functional compares the candidate Saito determinant with the defining polynomial while simultaneously penalising the failure of the candidate derivations to satisfy
the logarithmic divisibility conditions.

The main properties of the functional are established in Theorem~\ref{thm:penalised-saito}:

\begin{enumerate}
  \item[\textup{(i)}]
  \[
  0\leq
  \mathfrak S_{\lambda,\beta}(\mathcal{A};d_1,d_2)
  \leq1;
  \]

  \item[\textup{(ii)}]
  \[
  \mathfrak S_{\lambda,\beta}(\mathcal{A};d_1,d_2)=0
  \]
  if and only if $\mathcal{A}$ is free with exponents
  $(1,d_1,d_2)$, while an arrangement that is not free with the
  prescribed exponent pair has functional value strictly between $0$
  and $1$;

  \item[\textup{(iii)}]
  for fixed $(d_1,d_2)$, the functional is upper semicontinuous on the
  reduced configuration space and is continuous at arrangements that are
  free with the prescribed exponent pair.
\end{enumerate}

Moreover, the functional is nondecreasing in the penalty parameter $\lambda$, and its limit as $\lambda\to\infty$ recovers the corresponding binary target-pair freeness test. Although its zero locus is intrinsic, its positive values depend on the chosen Hermitian normalisation and on the parameters $\lambda$ and $\beta$. It should therefore be viewed as a bounded, coordinate-normalised search energy, rather than a projective invariant or a metric distance to the free locus.

In recent years, machine learning and heuristic optimisation have been used as exploratory tools in several areas of pure mathematics, particularly in settings involving large combinatorial or geometric search spaces \cite{He2022,He2023,He2025,Davies2021,Berglund2024,Gukov2021,Gukov2025,
heyes2026neuralnumericalmethodsmathrmg2structures, hirst2026minimisingwillmoreenergyneural,Aggarwal2024}. The feature emphasised here is that the principal search signal is a penalised relaxation of Saito's exact algebraic criterion, while exact arithmetic is retained for final certification.

Building on this relaxation, we develop a computational framework that explores the space of line arrangements in $\mathbb P^2$ with fixed cardinality and prescribed target exponents. A search move replaces one line of the current arrangement by another,
\[
\mathcal{A}'
=
\bigl(\mathcal{A}\setminus\{L_-\}\bigr)\cup\{L_+\},
\]
so that the cardinality remains fixed. Several search strategies are considered, including simulated annealing~\cite{Kirkpatrick1983}, greedy search~\cite{Cuntz2021}, random walks~\cite{Solis1981}, and MAP--Elites~\cite{mouret2015illuminatingsearchspacesmapping, Cully2018-nm}. These methods share a common proposal, numerical-evaluation, and exact-certification pipeline, but differ in how they select parents and propose, accept, or retain candidate arrangements.

This fixed-cardinality line-replacement philosophy is closely related to the greedy method of Cuntz~\cite{Cuntz2021}. Working primarily over finite fields, Cuntz replaces one line by a line through two existing intersection points and selects moves using a combinatorial objective; promising matroids are then studied for realisability in characteristic zero. The principal application is simpliciality. In freeness-related experiments, integral roots of the characteristic polynomial provide a necessary but not sufficient combinatorial filter, and the method recovers known examples that are free but not inductively free or not recursively free. Our framework retains the broad remove-and-replace move but changes the principal objective: the penalised Saito functional is evaluated on the actual realisation and detects freeness with the prescribed exponent pair at value zero, whereas characteristic-polynomial factorisation supplies a necessary combinatorial shell. We retain that shell as an inexpensive auxiliary term and require exact Saito certification for every reported discovery.

For the energy-based engines, the numerical objective combines an
approximation of the raw penalised Saito functional with a small combinatorial term measuring the discrepancy between $b_2(\mathcal{A})$ and the value
\[
b_2^*=(n-1)+d_1d_2
\]
required by Terao's factorisation theorem (Theorem~\ref{thm:Terao-factorisation}) for the target exponents. Separately, the replacement proposal mechanism used across the campaigns is itself biased towards the target $b_2$-shell: exact-shell proposals are favoured, while a controlled amount of off-shell exploration is retained. Thus the $b_2$ condition supplies inexpensive combinatorial guidance at the proposal level and, where applicable, through the auxiliary energy term, whereas exact Saito certification remains the sole proof of freeness. The precise conventions and the general search procedure are described in Section~\ref{sec:penalised-saito-functional}.

The search campaigns are carried out over $\mathbb{Q}$ and selected real and imaginary quadratic extensions. The numerical functional is used only to rank and select promising candidates. Every arrangement reported as free is verified independently in exact arithmetic over its original coefficient field using Saito's criterion. Thus numerical optimisation guides the exploration, whereas exact algebra supplies the final correctness guarantee. The searches focus on non-trivial arrangements and exclude pencils and near-pencils.

Section~\ref{sec:experimental_results} presents certified free arrangements found by the search campaigns, with particular emphasis on the multiplicity gap $\epsilon(\mathcal{A})=d_1-m(\mathcal{A})$. It records their distribution across cardinality ranges and exponent pairs, highlights large-gap arrangements attaining the lower Dimca--K\"uhne--Pokora bound~\cite{Dimca2026freelinearrangementslow}, and discusses their relevance as test cases in the study of Terao's conjecture.

The contribution of this work is threefold. First, the penalised Saito functional provides a bounded, upper semicontinuous relaxation whose zero locus is exactly the locus of arrangements free with the prescribed exponent pair. Second, the computational framework provides a mechanism for generating and exactly certifying free line arrangements across a range of cardinalities, exponent pairs, and coefficient fields. Third, the resulting dataset supplies a large collection of combinatorially diverse, exactly certified free arrangements, including non-supersolvable examples with large multiplicity gap. These arrangements provide test cases for identifying combinatorial patterns associated with freeness and for stress-testing searches for potential counterexamples to Terao's conjecture by investigating whether their intersection lattices also admit non-free realisations.

\medskip

\noindent\textbf{Paper organisation.}
In \S\ref{sec:preliminaries-arrangements} we review the necessary background on line arrangements. In \S\ref{sec:penalised-saito-functional} we introduce the penalised Saito functional, establish its main properties, and describe the general fixed-cardinality search procedure. In \S\ref{sec:experimental_results} we discuss the outputs of the search campaigns. Finally, in \S\ref{sec:conclusion} we summarise the conclusions and discuss possible directions for future work.

\section{Line arrangements in \texorpdfstring{$\mathbb{P}^2$}{P2}}
\label{sec:preliminaries-arrangements}

In this section we recall the basic notions concerning line arrangements, logarithmic derivations, and freeness that will be used throughout the paper. Standard references include~\cite{Orlik1992,Dimca2017,Yoshinaga2014}.

Let $S=\mathbb C[x,y,z]$ be the homogeneous coordinate ring of $\mathbb P^2$. A \emph{reduced projective line arrangement} is a finite collection
\[
\mathcal{A}=\{L_1,\ldots,L_n\}
\]
of $n$ distinct projective lines. For each $i$, choose a linear form
$\alpha_i\in S_1$ such that $L_i=V(\alpha_i)$, and associate with
$\mathcal{A}$ the reduced defining polynomial
\footnote{When no confusion can arise, we write $Q$ in place of
$Q(\mathcal{A})$.}
\[
Q(\mathcal{A})=\prod_{i=1}^n\alpha_i.
\]
This polynomial is determined by the arrangement up to multiplication by a nonzero scalar. Thus $\mathcal{A}$ may be identified with the reduced divisor $V(Q(\mathcal{A}))\subset\mathbb P^2$. It is often convenient to work in $\mathbb C^3$, where the same polynomial defines a central hyperplane arrangement. We pass freely between these two viewpoints.

The arrangement is called \emph{essential} if the corresponding central hyperplanes intersect only at the origin, or equivalently if the projective lines do not all pass through a common point. Thus a nonessential reduced arrangement of at least two lines is a \emph{pencil}.

Let
\[
\mathcal{A}^{\mathrm{cen}}
=
\{H_1,\ldots,H_n\},
\qquad
H_i=V_{\mathbb C^3}(\alpha_i),
\]
be the central arrangement associated with $\mathcal{A}$. Its
\emph{intersection lattice} is
\[
\mathcal L(\mathcal{A})
=
\left\{
\bigcap_{i\in I}H_i:
I\subseteq\{1,\ldots,n\}
\right\},
\]
partially ordered by reverse inclusion, where the intersection corresponding to $I=\varnothing$ is $\mathbb C^3$. Thus the minimal element is $\hat 0=\mathbb C^3$. The rank of a flat $X\in\mathcal L(\mathcal{A})$ is its codimension in $\mathbb C^3$, and we write
\[
\mathcal{L}_2(\mathcal{A})
=
\left\{
X\in\mathcal L(\mathcal{A}):
\operatorname{codim}_{\mathbb C^3}X=2
\right\}.
\]
If $\mathcal{A}$ is essential, then $\{0\}$ is its unique rank-three flat; if $\mathcal{A}$ is a pencil, its intersection lattice has rank two.

The rank-one flats are the hyperplanes $H_i$, corresponding to the projective lines $L_i$, while the rank-two flats correspond to the intersection points of the projective arrangement. Thus $\mathcal L(\mathcal{A})$ records the line--point incidence structure of $\mathcal{A}$. For $X\in\mathcal{L}_2(\mathcal{A})$, define its multiplicity by
\[
m_X
=
\#\left\{
i\in\{1,\ldots,n\}:X\subseteq H_i
\right\}.
\]
If $X$ corresponds to the projective point $p\in\mathbb P^2$, we also write $m_p=m_X$. Equivalently,
\[
m_p
=
\#\{L_i\in\mathcal{A}:p\in L_i\}.
\]
We write $t_m(\mathcal{A})$ for the number of projective intersection points of multiplicity $m$. These numbers satisfy the double-counting identity
\begin{equation}
\label{eq:double-counting}
\sum_{m\geq2}\binom{m}{2}t_m(\mathcal{A})
=
\sum_{X\in\mathcal{L}_2(\mathcal{A})}
\binom{m_X}{2}
=
\binom{n}{2},
\end{equation}
since every pair of distinct lines meets in exactly one projective point.

\subsection{Characteristic polynomial and Betti numbers}

Associated with $\mathcal L(\mathcal{A})$ is the \emph{M\"obius function}
$\mu: \mathcal{L}(\mathcal{A})\rightarrow \mathbb{Z}$, defined recursively by $\mu(\hat0)=1$ and
\[
\mu(X)=-\sum_{Y<X}\mu(Y)
\]
for $X\neq\hat0$, where $\hat0$ denotes the ambient space. An explicit calculation gives $\mu(H_i)=-1$ for every rank-one flat $H_i$, and $\mu(X)=m_X-1$ for every $X$ in $\mathcal{L}_2(\mathcal{A})$.

The characteristic polynomial of the associated central arrangement is
\[
\chi(\mathcal{A},t)=\sum_{X\in\mathcal L(\mathcal{A})}
\mu(X)t^{\dim X}.
\]
For an arrangement of $n$ projective lines, it has the form
\begin{equation}
\label{eq:chi}
\chi(\mathcal{A},t)
=t^3-nt^2+b_2(\mathcal{A})t-\bigl(b_2(\mathcal{A})-n+1\bigr),
\end{equation}
where
\begin{equation}
\label{eq:b2}
b_2(\mathcal{A})
=\sum_p(m_p-1)
=\sum_{m\geq2}(m-1)t_m(\mathcal{A}).
\end{equation}
Here $b_2(\mathcal{A})$ is the second characteristic coefficient,
equivalently the second Betti number of the central arrangement complement.
Since $\chi(\mathcal{A},1)=0$, one has
\begin{equation}
\label{eq:chi-factored}
\chi(\mathcal{A},t)
=(t-1)\left(t^2-(n-1)t+b_2(\mathcal{A})-n+1\right).
\end{equation}

To distinguish the central and projective complements, put
\[
M_{\mathrm{cen}}(\mathcal{A})
=\mathbb C^3\setminus\bigcup_{L\in\mathcal{A}}H_L,
\qquad
M_{\mathrm{proj}}(\mathcal{A})
=\mathbb P^2\setminus\bigcup_{L\in\mathcal{A}}L,
\]
where $H_L$ is the central hyperplane corresponding to $L$. Their Poincar\'e
polynomials are
\begin{align}
\pi_{\mathrm{cen}}(\mathcal{A},t)
&=(-t)^3\chi(\mathcal{A},-1/t) \notag\\
&=1+nt+b_2(\mathcal{A})t^2
+\bigl(b_2(\mathcal{A})-n+1\bigr)t^3,
\label{eq:poincare-central}
\\
\pi_{\mathrm{proj}}(\mathcal{A},t)
&=\frac{\pi_{\mathrm{cen}}(\mathcal{A},t)}{1+t}
=1+(n-1)t+\bigl(b_2(\mathcal{A})-n+1\bigr)t^2.
\label{eq:poincare-projective}
\end{align}
Thus the quantity in \eqref{eq:b2}, which is used below as a
characteristic-polynomial coefficient, should not be confused with the second
Betti number $b_2(\mathcal{A})-n+1$ of the projective complement.

\subsection{Logarithmic derivations}

Let
\[
\operatorname{Der}_{\mathbb C}(S)
=S\partial_x\oplus S\partial_y\oplus S\partial_z
\]
be the $S$-module of $\mathbb C$-derivations of $S$. A homogeneous derivation
$\theta=f\partial_x+g\partial_y+h\partial_z$ has polynomial degree $d$ if
$f,g,h\in S_d$. The module of \emph{logarithmic derivations} of $\mathcal{A}$ is
\[
D(\mathcal{A})
=\bigl\{\theta\in\operatorname{Der}_{\mathbb C}(S):
\theta(\alpha_i)\in S\alpha_i\text{ for all }i\bigr\}.
\]
Equivalently,
\[
D(\mathcal{A})
=\{\theta\in\operatorname{Der}_{\mathbb C}(S):
\theta(Q)\in SQ\}.
\]
Its elements are the polynomial vector fields tangent to every line of the arrangement. We write $D(\mathcal{A})_d$ for the finite-dimensional space of homogeneous logarithmic derivations of polynomial degree $d$.

A distinguished element is the Euler derivation
\[
\theta_E=x\partial_x+y\partial_y+z\partial_z,
\]
which belongs to $D(\mathcal{A})_1$ because $\theta_E(Q)=nQ$. The module $D(\mathcal{A})$ is a graded reflexive $S$-module of rank $3$.

Logarithmic derivations may also be interpreted using syzygies among the partial derivatives of $Q$. Write
\[
\operatorname{AR}(Q)
:=\{(f,g,h)\in S^3:fQ_x+gQ_y+hQ_z=0\}.
\]
Euler's identity gives the graded decomposition
\[
D(\mathcal{A})\cong S\theta_E\oplus\operatorname{AR}(Q).
\]
Thus freeness amounts to finding sufficiently many independent homogeneous relations among $Q_x,Q_y,Q_z$.

\subsection{Free line arrangements and Saito's criterion}

An arrangement $\mathcal{A}$ is \emph{free} if $D(\mathcal{A})$ is a free
$S$-module. For a free line arrangement one has
\[
D(\mathcal{A})
\cong S(-1)\oplus S(-d_1)\oplus S(-d_2)
\]
for integers $0\leq d_1\leq d_2$, where $S(-1)$ corresponds to the Euler
derivation. The integers $(1,d_1,d_2)$ are called the \emph{exponents} of $\mathcal{A}$; with $d_1\leq d_2$, we write
\[
\exp(\mathcal{A})=(1,d_1,d_2).
\] 
We often call $(d_1,d_2)$ the non-Euler exponent pair. If the arrangement is essential,
then $d_1\geq1$. The case $d_1=0$ accounts for nonessential arrangements
such as pencils.

\begin{theorem}[Saito's criterion~\cite{Saito1980}]
\label{thm:Saito-criterion}
Let $\theta_0,\theta_1,\theta_2\in D(\mathcal{A})$ be homogeneous
derivations, and write
$\theta_i=f_{i1}\partial_x+f_{i2}\partial_y+f_{i3}\partial_z$. Form
\[
M(\theta_0,\theta_1,\theta_2)
=
\begin{pmatrix}
f_{01}&f_{02}&f_{03}\\
f_{11}&f_{12}&f_{13}\\
f_{21}&f_{22}&f_{23}
\end{pmatrix}.
\]
Then $\mathcal{A}$ is free with basis
$\{\theta_0,\theta_1,\theta_2\}$ if and only if
\[
\det M(\theta_0,\theta_1,\theta_2)=cQ
\]
for some $c\in\mathbb C^\times$.
\end{theorem}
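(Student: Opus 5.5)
We prove the two implications separately, working in $S=\mathbb C[x,y,z]$ with the $\theta_i$ homogeneous of degrees $e_0,e_1,e_2$ and all $\theta_i\in D(\mathcal{A})$. Throughout, $\Delta:=\det M(\theta_0,\theta_1,\theta_2)$.

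\emph{First, $Q$ divides $\Delta$ for any three logarithmic derivations.} Fix a line $L_i=V(\alpha_i)$ and change coordinates linearly so that $\alpha_i=x$. Logarithmicity $\theta_j(x)\in Sx$ says that the first column of $M$ is divisible by $x$. Hence $x\mid\Delta$. A linear change of coordinates multiplies $\Delta$ by a nonzero constant, namely the determinant of the transition matrix, so the conclusion holds in the original coordinates. The forms $\alpha_i$ are pairwise coprime, being distinct lines, and $S$ is a UFD. Therefore $Q=\prod\alpha_i$ divides $\Delta$.

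\emph{($\Leftarrow$) Suppose $\Delta=cQ$ with $c\neq0$; we show the $\theta_j$ form a basis.} The $\theta_j$ are linearly independent over $S$, since $\Delta\neq0$. For spanning, take $\theta\in D(\mathcal{A})$ and solve $\theta=\sum g_j\theta_j$ over the fraction field using Cramer's rule. This gives $g_j=\Delta_j/\Delta$, where $\Delta_j$ is the determinant with row $j$ replaced by the coefficients of $\theta$. The triple obtained by replacing $\theta_j$ with $\theta$ again consists of logarithmic derivations. By the first step, $Q\mid\Delta_j$, so $g_j=\Delta_j/(cQ)\in S$. Hence $\{\theta_0,\theta_1,\theta_2\}$ is a basis and $\mathcal{A}$ is free.

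\emph{($\Rightarrow$) Suppose $\{\theta_0,\theta_1,\theta_2\}$ is a basis.} The idea is to compare $\Delta$ with $Q$ locally along each line. Consider the derivations $Q\partial_x,Q\partial_y,Q\partial_z$. They are logarithmic, since $Q\partial(Q)\in SQ$. So each can be written as $\sum a_{kj}\theta_j$ with $a_{kj}\in S$, which gives $Q\cdot I=AM$ and hence $Q^3=\det A\cdot\Delta$. Consequently every irreducible factor of $\Delta$ is one of the $\alpha_i$. Together with $Q\mid\Delta$, this gives $\Delta=c\prod\alpha_i^{k_i}$ with $k_i\ge1$.

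\emph{The remaining difficulty is showing each $k_i=1$.} We argue by localisation at the prime $(\alpha_i)$: after the coordinate change $\alpha_i=x$, the local ring $S_{(x)}$ contains the inverses of all the other $\alpha_j$. There, $D(\mathcal{A})_{(x)}$ equals the module of derivations preserving $(x)$. This module has basis $x\partial_x,\partial_y,\partial_z$, whose determinant is $x$. Since $\{\theta_j\}$ is a basis of $D(\mathcal{A})$, it stays a basis after localisation. Two bases differ by a matrix invertible over $S_{(x)}$, so $\Delta$ equals $x$ times a unit of $S_{(x)}$. Hence $k_i=1$, and so $\Delta=cQ$ with $c\neq0$.

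The main obstacle is this last step: we must justify that localisation commutes with forming $D(\mathcal{A})$. This holds because the defining conditions $\theta(\alpha_j)\in S\alpha_j$ are compatible with localisation, and those for $j\neq i$ become vacuous once $\alpha_j$ is a unit.
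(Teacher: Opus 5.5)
The paper does not prove this theorem; it quotes it from Saito. The only related argument the paper supplies is the proof of Lemma~\ref{lem:divisibility-obstruction}. Your proof is correct and complete, and it is the standard one (as in Orlik--Terao).

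Your first step is the same divisibility argument as in that lemma. The difference is that you straighten $\alpha_i$ to $x$ by a change of coordinates. The paper instead notes that the nonzero constant vector $(a_{i0},a_{i1},a_{i2})^{T}$ lies in the kernel of $M$ modulo $\alpha_i$, so $\det M$ vanishes in the domain $S/\alpha_iS$. That version is coordinate-free and spares you from tracking how coefficient matrices transform. Your Cramer's-rule argument for $(\Leftarrow)$ is fine as written. So is your use of $Q\partial_x,Q\partial_y,Q\partial_z$ to show that $\Delta$ is a nonzero scalar times $\prod\alpha_i^{k_i}$.

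The localisation step you flag as the main obstacle is also sound. $D(\mathcal{A})$ is the kernel of $S^3\to\bigoplus_j S/\alpha_jS$, localisation is exact, and $(S/\alpha_jS)_{(\alpha_i)}=0$ for $j\neq i$; this is exactly the justification you sketch.

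If you prefer to avoid localisation, the same idea runs globally. With $\alpha_i=x$, the derivations $Q\partial_x$, $(Q/x)\partial_y$ and $(Q/x)\partial_z$ are logarithmic, and their coefficient matrix has determinant $x(Q/x)^3$. Writing them in the basis $\theta_0,\theta_1,\theta_2$ shows that $\Delta$ divides $x(Q/x)^3$. Since $x\nmid Q/x$, it follows that $x^2\nmid\Delta$. This one computation also shows that every irreducible factor of $\Delta$ is some $\alpha_j$, so it can replace your $Q\partial_x,Q\partial_y,Q\partial_z$ step as well.
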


For line arrangements, take $\theta_0=\theta_E$. For homogeneous
derivations $u$ and $v$, define
\begin{equation}
\label{eq:B-map}
B(u,v):=\det M(\theta_E,u,v).
\end{equation}
Then $\mathcal{A}$ is free with exponents $(1,d_1,d_2)$ if and only if there
exist $u\in D(\mathcal{A})_{d_1}$ and $v\in D(\mathcal{A})_{d_2}$ such that
\begin{equation}
\label{eq:Saito-det}
B(u,v)=cQ
\end{equation}
for some $c\in\mathbb C^\times$.

The following elementary consequence is central to the construction in Section~\ref{sec:penalised-saito-functional}. It is the degree-specific form of the standard determinant divisibility property for logarithmic derivations; see \cite[Proposition~4.12]{Orlik1992}.

\begin{lemma}
\label{lem:divisibility-obstruction}
Suppose that $u\in D(\mathcal{A})_{d_1}$ and $v\in D(\mathcal{A})_{d_2}$, where $d_1+d_2=n-1$. Then
\[
B(u,v)=cQ
\]
for some $c\in\mathbb C$, possibly $c=0$.
\end{lemma}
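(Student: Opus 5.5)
The argument has two parts: each linear form $\alpha_i$ divides $B(u,v)$, and then a degree count forces $B(u,v)$ to be a scalar multiple of $Q$.

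\emph{Degree.} Each entry in the row of $\theta_E$ has degree $1$, the entries in the row of $u$ have degree $d_1$, and those in the row of $v$ have degree $d_2$. Expanding the determinant along rows shows that $B(u,v)$ is either zero or a homogeneous polynomial of degree $1+d_1+d_2=n$.

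\emph{Divisibility by each $\alpha_i$.} Fix $i$ and write $\alpha_i=a x+b y+c z$ with $(a,b,c)\neq0$. Right-multiplying $M(\theta_E,u,v)$ by the column vector $(a,b,c)^{T}$ produces the column $(\theta_E(\alpha_i),u(\alpha_i),v(\alpha_i))^{T}=(\alpha_i,u(\alpha_i),v(\alpha_i))^{T}$. Since $u,v\in D(\mathcal{A})$, every entry of this column lies in $S\alpha_i$.

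To turn this into a divisibility statement for the determinant, choose an invertible constant matrix $P\in GL_3(\mathbb C)$ whose first column is $(a,b,c)^{T}$. This is possible because $(a,b,c)\neq0$. Then $\det(MP)=\det M\cdot\det P$. The first column of $MP$ is divisible by $\alpha_i$, so expanding $\det(MP)$ along that column shows that $\alpha_i$ divides $\det(MP)$. Because $\det P$ is a nonzero constant, $\alpha_i$ divides $B(u,v)$.

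\emph{Conclusion.} The $\alpha_i$ are pairwise non-associate linear forms, since the lines are distinct. As $S$ is a UFD, their product $Q$ divides $B(u,v)$. If $B(u,v)\neq0$, it is homogeneous of degree $n=\deg Q$, so the quotient is a homogeneous polynomial of degree $0$, that is, a nonzero constant. Otherwise $B(u,v)=0=0\cdot Q$. Either way $B(u,v)=cQ$ for some $c\in\mathbb C$.

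The only point needing care is the column operation, namely that passing to $MP$ preserves divisibility. Choosing $P$ invertible with first column $(a,b,c)^{T}$ handles this cleanly, with no need to assume any particular coefficient of $\alpha_i$ is nonzero.
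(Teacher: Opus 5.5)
Your proof is correct and follows the same route as the paper. You multiply $M(\theta_E,u,v)$ by the coefficient vector of $\alpha_i$ to obtain a column divisible by $\alpha_i$, deduce $\alpha_i\mid B(u,v)$, use pairwise coprimality of the $\alpha_i$ to get $Q\mid B(u,v)$, and finish with the degree count $1+d_1+d_2=n$; your invertible matrix $P$ with first column $(a,b,c)^{T}$ makes explicit the step from a column vanishing modulo $\alpha_i$ to $\alpha_i\mid\det M$, which the paper leaves implicit.
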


\begin{proof}[Proof.]
Write $\alpha_i=a_{i0}x+a_{i1}y+a_{i2}z$. Modulo $\alpha_i$, logarithmicity gives
\[
M(\theta_E,u,v)
\begin{pmatrix}
a_{i0}\\a_{i1}\\a_{i2}
\end{pmatrix}
=
\begin{pmatrix}
\alpha_i\\u(\alpha_i)\\v(\alpha_i)
\end{pmatrix}
\equiv0\pmod{\alpha_i}.
\]
Hence $\alpha_i\mid B(u,v)$ for every $i$. Since the arrangement is reduced, the linear forms $\alpha_i$ are pairwise relatively prime, and therefore $Q\mid B(u,v)$. If $B(u,v)\neq0$, then
\[
\deg B(u,v)=1+d_1+d_2=n=\deg Q,
\]
so the quotient is constant. The assertion is immediate when $B(u,v)=0$.
\end{proof}

Consequently,
\begin{equation}
\label{eq:exact-determinant-line}
B\bigl(D(\mathcal{A})_{d_1},D(\mathcal{A})_{d_2}\bigr)
\subseteq\mathbb C Q
\end{equation}
whenever $d_1+d_2=n-1$. Equivalently, combining Lemma~\ref{lem:divisibility-obstruction} with Saito's criterion gives
\begin{equation}
\label{eq:target-pair-Saito}
\mathcal{A}\text{ is free with exponents }(1,d_1,d_2)
\quad\Longleftrightarrow\quad
B\bigl(D(\mathcal{A})_{d_1},D(\mathcal{A})_{d_2}\bigr)\neq\{0\},
\end{equation}
provided $d_1+d_2=n-1$. In particular, if $\mathcal{A}$ is not free with the prescribed pair, then
\begin{equation}
\label{eq:kernel-determinant-zero}
B\bigl(D(\mathcal{A})_{d_1},D(\mathcal{A})_{d_2}\bigr)=\{0\}.
\end{equation}
Section~\ref{sec:penalised-saito-functional} obtains a graded energy by moving to fixed ambient coefficient spaces and penalising, rather than imposing, logarithmicity.

\subsection{Terao's factorisation and combinatorial constraints}
\label{subsec:numerical-constraints}

A major consequence of freeness is the following factorisation result.

\begin{theorem}[Terao's factorisation theorem~\cite{Terao1981}]
\label{thm:Terao-factorisation}
If $\mathcal{A}$ is free with exponents $(1,d_1,d_2)$, then
\[
\chi(\mathcal{A},t)=(t-1)(t-d_1)(t-d_2).
\]
\end{theorem}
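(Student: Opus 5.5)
The statement is Terao's factorisation theorem. I will prove it only for line arrangements, using the explicit forms \eqref{eq:chi} and \eqref{eq:chi-factored} of $\chi(\mathcal{A},t)$. By \eqref{eq:chi-factored}, it suffices to show two things: the quadratic factor $t^2-(n-1)t+b_2(\mathcal{A})-n+1$ has roots $d_1,d_2$, that is, $d_1+d_2=n-1$, and $d_1d_2=b_2(\mathcal{A})-n+1$. The approach is to read both identities off from the graded structure of the free module $D(\mathcal{A})\cong S(-1)\oplus S(-d_1)\oplus S(-d_2)$.

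\textbf{Step 1: the sum.} Take a basis $\theta_E,\theta_1,\theta_2$ of degrees $1,d_1,d_2$. Saito's criterion (Theorem~\ref{thm:Saito-criterion}) gives $\det M(\theta_E,\theta_1,\theta_2)=cQ$ with $c\neq0$. Comparing degrees gives $1+d_1+d_2=n$, hence $d_1+d_2=n-1$.

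\textbf{Step 2: the product.} This is the main content of the proof, and I would do it by a local computation of Chern classes. Sheafify to get $T\langle\mathcal{A}\rangle=\widetilde{D_0(\mathcal{A})}$, the logarithmic tangent sheaf on $\mathbb P^2$, where $D_0\cong\operatorname{AR}(Q)$ is the Euler complement. Freeness gives $D_0\cong S(-d_1)\oplus S(-d_2)$, so $T\langle\mathcal{A}\rangle\cong\mathcal O(-d_1)\oplus\mathcal O(-d_2)$ up to the standard twist convention, and therefore $c_2=d_1d_2$. Alternatively, I could avoid sheaves and use Hilbert series. The module $\operatorname{AR}(Q)$ is the syzygy module of the Jacobian ideal $J=(Q_x,Q_y,Q_z)$, so there is an exact sequence
\[
0\to \operatorname{AR}(Q)\to S(-(n-1))^3\to S\to S/J\to 0.
\]
When $\operatorname{AR}(Q)\cong S(-d_1)\oplus S(-d_2)$, this computes the Hilbert series of $S/J$ exactly. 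The degree of the Milnor/Tjurina scheme $S/J$, which is its leading Hilbert polynomial coefficient, then equals
\[
(n-1)^2-d_1d_2,
\]
by the standard computation using $d_1+d_2=n-1$. On the other side, $J$ defines the singular points of the curve $V(Q)$. Each point $p$ of multiplicity $m_p$ is an ordinary $m_p$-fold point, which is quasi-homogeneous, so its Tjurina number equals its Milnor number $(m_p-1)^2$. Hence
\[
\deg(S/J)=\sum_p (m_p-1)^2.
\]
Equating the two expressions gives $d_1d_2=(n-1)^2-\sum_p(m_p-1)^2$.

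\textbf{Step 3: convert to $b_2$.} I then show that $(n-1)^2-\sum_p(m_p-1)^2=b_2(\mathcal{A})-n+1$. Expanding, $\sum_p(m_p-1)^2=\sum_p m_p(m_p-1)-\sum_p(m_p-1)$. The double-counting identity \eqref{eq:double-counting} gives $\sum_p m_p(m_p-1)=n(n-1)$, and \eqref{eq:b2} gives $\sum_p(m_p-1)=b_2(\mathcal{A})$. So $\sum_p(m_p-1)^2=n(n-1)-b_2(\mathcal{A})$, and therefore
\[
(n-1)^2-\bigl(n(n-1)-b_2(\mathcal{A})\bigr)=b_2(\mathcal{A})-n+1,
\]
as required.

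The main obstacle is the Hilbert-series step. One must justify that the Hilbert polynomial of $S/J$ is the constant $\sum_p\tau_p$ because $J$ has finite colength in each local ring, and that the Tjurina number of an ordinary multiple point is $(m-1)^2$. Both facts are standard, so I would cite them rather than prove them.
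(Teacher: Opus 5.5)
The paper does not prove this statement. It quotes Terao's theorem, which holds for free arrangements of any rank, and uses only its consequence \eqref{eq:exponent-relations}. Your argument is a correct proof that takes a genuinely different route, and it is special to $\mathbb P^2$.

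\begin{itemize}
\item \textbf{Step~1} gives $d_1+d_2=n-1$ from the Saito determinant. A basis containing $\theta_E$ exists because $D(\mathcal{A})=S\theta_E\oplus\operatorname{AR}(Q)$, and a graded direct summand of a free module over $S$ is free.
\item \textbf{Step~2} is the forward direction of the du~Plessis--Wall/Dimca description of free plane curves: $\sum_p\tau_p=(n-1)^2-d_1d_2$.
\item \textbf{Step~3} turns $\sum_p(m_p-1)^2$ into $n(n-1)-b_2(\mathcal{A})$ via \eqref{eq:double-counting}. Together with Step~1, this shows the quadratic factor in \eqref{eq:chi-factored} is $(t-d_1)(t-d_2)$.
\end{itemize}

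Your route gives an elementary rank-three argument using only Hilbert series and local Tjurina numbers. It is phrased through the Jacobian relations $\operatorname{AR}(Q)$ that the paper later uses for $\operatorname{mdr}$. Terao's theorem gives generality instead. Your method needs $V(Q)$ to have a finite singular locus consisting of ordinary multiple points, and this fails in higher rank.

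Three bookkeeping points should be fixed.
\begin{itemize}
\item \textbf{Grading shift.} Inside $S(-(n-1))^3$ the syzygy module is $S(-(n-1)-d_1)\oplus S(-(n-1)-d_2)$, not $S(-d_1)\oplus S(-d_2)$. With this shift, the Hilbert-series numerator $1-3t^{n-1}+t^{n-1+d_1}+t^{n-1+d_2}$ vanishes to order two at $t=1$, precisely because $d_1+d_2=n-1$. Half of its second derivative at $t=1$ is $(n-1)^2-d_1d_2$, which is the constant Hilbert polynomial.
\item \textbf{Local lengths.} You should say why the local length of $S/J$ at $p$ is $\tau_p$. Euler's identity makes $J$ dehomogenise to $(f,f_x,f_y)$. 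The factors of $f$ coming from lines not through $p$ are local units, and they do not change $\tau_p$.
\item \textbf{Chern-class sketch.} This sketch alone does not close the argument. It gives $c_2=d_1d_2$ but no combinatorial formula for $c_2$. The proof is carried by the Hilbert-series version.
\end{itemize}
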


Comparing this factorisation with \eqref{eq:chi-factored} gives the necessary conditions
\begin{equation}
\label{eq:exponent-relations}
d_1+d_2=n-1,
\qquad
d_1d_2=b_2(\mathcal{A})-(n-1).
\end{equation}
For a prescribed pair $d_1+d_2=n-1$, define the \emph{target Betti coefficient}
\begin{equation}
\label{eq:b2-target}
b_2^*(n;d_1,d_2):=(n-1)+d_1d_2.
\end{equation}
Thus freeness with exponents $(1,d_1,d_2)$ implies $b_2(\mathcal{A})=b_2^*(n;d_1,d_2)$, but the converse need not hold.

For a free arrangement, the integers $d_1$ and $d_2$ are the roots of
\[
t^2-(n-1)t+b_2(\mathcal{A})-n+1=0.
\]
For integer exponents to exist, the discriminant
\begin{equation}
\label{eq:discriminant}
\Delta(\mathcal{A})
=(n-1)^2-4\bigl(b_2(\mathcal{A})-n+1\bigr)
\end{equation}
must be a nonnegative perfect square. An arrangement for which this fails cannot be free.

When the discriminant in \eqref{eq:discriminant} is a nonnegative perfect square, the quadratic factor of the characteristic polynomial determines the only possible integral exponent pair $(d_1,d_2)$. This is a necessary arithmetic condition for freeness, but it is not sufficient; freeness must still be verified by Saito's criterion.

In the construction below, $(d_1,d_2)$ denotes a prescribed \emph{target pair}, rather than a pair inferred from the current arrangement. The penalised functional is defined for every reduced arrangement and every
pair $d_1+d_2=n-1$, whether or not
\[
b_2(\mathcal{A})=b_2^*(n;d_1,d_2).
\]
In the search campaigns, the target $b_2$-shell is used in two distinct ways. First, the proposal mechanism favours replacements lying on the shell, while allowing nearby off-shell proposals. Second, for the energy-based engines, a small auxiliary term penalises the discrepancy from the shell. Neither use provides a freeness certificate: every reported arrangement is verified exactly by Saito's criterion.

\section{The penalised Saito functional and search framework}
\label{sec:penalised-saito-functional}

This section introduces a bounded, upper semicontinuous relaxation of Saito's criterion and explains how it is used to guide a fixed-cardinality search for free line arrangements. The construction is formulated over $\mathbb{C}$. Arrangements defined over the exact coefficient fields used in the search are evaluated after choosing an embedding into $\mathbb{C}$ and are certified over their original exact field.

\subsection{The penalised Saito functional}
\label{subsec:saito-functional}
Fix a reduced line arrangement
$\mathcal{A}=\{V(\alpha_1),\ldots,V(\alpha_n)\}$, and fix
$d_1,d_2\in\mathbb Z_{\geq0}$ satisfying $d_1+d_2=n-1$.
Lemma~\ref{lem:divisibility-obstruction} shows that
\[
B\bigl(D(\mathcal{A})_{d_1},D(\mathcal{A})_{d_2}\bigr)
\subseteq
\mathbb C Q(\mathcal{A}).
\]
To obtain a graded energy, we work on fixed ambient coefficient spaces and
penalise the failure of logarithmicity.

Put $S_d=\mathbb C[x,y,z]_d$ and $E_d=S_d^{\oplus3}$. On $S_d$ we use the
Bombieri--Weyl Hermitian inner product
\begin{equation}
\label{eq:BW-inner-product}
\left\langle
\sum_{|\nu|=d}c_\nu x^{\nu_0}y^{\nu_1}z^{\nu_2},
\sum_{|\nu|=d}c'_\nu x^{\nu_0}y^{\nu_1}z^{\nu_2}
\right\rangle_d
=
\sum_{|\nu|=d}
\frac{c_\nu\overline{c'_\nu}}{\binom d\nu},
\end{equation}
where
\[
\binom d\nu
=
\frac{d!}{\nu_0!\nu_1!\nu_2!},
\]
and we give $E_d$ the corresponding direct-sum norm. For each line $L_i=V(\alpha_i)$, rescale its defining form so that
\[
\|\alpha_i\|_1=1.
\]
Throughout the construction, these same unit representatives are used both
in the residual operator $L_{\mathcal{A},d}$ and in the chosen representative
\[
Q(\mathcal{A})
=
\prod_{i=1}^n\alpha_i
\]
of the defining polynomial.

For a unit linear form
\[
\alpha=a_0x+a_1y+a_2z
\]
and $d\geq1$, let
\[
\mu_{\alpha,d}:S_{d-1}\longrightarrow S_d,
\qquad
h\longmapsto\alpha h,
\]
and define
\begin{equation}
\label{eq:multiplication-projector}
\Pi_{\alpha,d}
:=
\mu_{\alpha,d}
(\mu_{\alpha,d}^{*}\mu_{\alpha,d})^{-1}
\mu_{\alpha,d}^{*}.
\end{equation}
Here $\mu_{\alpha,d}^{*}$ is the adjoint with respect to the
Bombieri--Weyl inner products. Multiplication by a nonzero linear form is
injective, so \eqref{eq:multiplication-projector} is well defined and is the
orthogonal projector onto $\alpha S_{d-1}$.

For
\[
u=(f_0,f_1,f_2)\in E_d,
\]
set
\begin{equation}
\label{eq:line-residual}
\rho_{\alpha,d}(u)
:=
(I-\Pi_{\alpha,d})
(a_0f_0+a_1f_1+a_2f_2).
\end{equation}
Define the arrangement residual operator by
\begin{equation}
\label{eq:arrangement-residual}
L_{\mathcal{A},d}:E_d\longrightarrow S_d^{\oplus n},
\qquad
L_{\mathcal{A},d}(u)
:=
\frac{1}{\sqrt n}
\bigl(
\rho_{\alpha_1,d}(u),
\ldots,
\rho_{\alpha_n,d}(u)
\bigr).
\end{equation}
Then
\begin{equation}
\label{eq:residual-kernel}
\ker L_{\mathcal{A},d}
=
D(\mathcal{A})_d,
\end{equation}
because $\rho_{\alpha_i,d}(u)=0$ is equivalent to
\[
u(\alpha_i)\in\alpha_iS_{d-1}
\]
for every $i$. Moreover,
\begin{equation}
\label{eq:residual-energy}
\|L_{\mathcal{A},d}u\|^2
=
\frac{1}{n}
\sum_{i=1}^n
\|\rho_{\alpha_i,d}(u)\|^2.
\end{equation}
Thus the factor $n^{-1/2}$ in~\eqref{eq:arrangement-residual} averages the squared residual over the $n$ lines
and makes its scale more comparable across cardinalities.

For $d=0$, we use the conventions
\[
S_{-1}=0,
\qquad
\mu_{\alpha,0}:0\longrightarrow S_0,
\qquad
\Pi_{\alpha,0}=0.
\]
With these conventions, the same kernel identity remains valid. 

We record the dependence of these operators on the defining forms. For
$d\geq1$, choose Bombieri--Weyl orthonormal bases of $S_{d-1}$ and $S_d$.
With respect to these bases, the matrix of $\mu_{\alpha,d}$ depends linearly
on the coefficients of $\alpha$. Since multiplication by a nonzero linear
form is injective, the Hermitian operator
$\mu_{\alpha,d}^{*}\mu_{\alpha,d}$ is positive definite. Its inverse therefore
depends real-analytically on any local unit lift of $[\alpha]$. It follows
from \eqref{eq:multiplication-projector} and \eqref{eq:line-residual} that
$\Pi_{\alpha,d}$ and $\rho_{\alpha,d}$ also vary real-analytically with such
a lift. For $d=0$, the same conclusion follows directly from the conventions
above.

Using the unit representatives fixed above, put
\[
q_{\mathcal{A}}
=
\frac{Q(\mathcal{A})}{\|Q(\mathcal{A})\|_n}.
\]
The remaining phase choices do not affect the scalar quantities used below.
Indeed, if $\alpha_i$ is replaced by
\[
\alpha_i'=\zeta_i\alpha_i,
\qquad
|\zeta_i|=1,
\]
then
\[
\Pi_{\alpha_i',d}=\Pi_{\alpha_i,d},
\qquad
\rho_{\alpha_i',d}=\zeta_i\rho_{\alpha_i,d},
\qquad
q_{\mathcal{A}}'=
\left(\prod_{i=1}^n\zeta_i\right)q_{\mathcal{A}}.
\]
Consequently, the residual norms and
\[
\bigl|\langle B(u,v),q_{\mathcal{A}}\rangle_n\bigr|^2
\]
are independent of the chosen unit representatives. A permutation of the
lines merely permutes the blocks of $L_{\mathcal{A},d}$ and therefore also
leaves their direct-sum norm unchanged.

Fix a penalty strength $\lambda>0$ and an exponent $0<\beta<1$. Let $\mathbb S(E_d)=\{u\in E_d:\|u\|=1\}$. For
$(u,v)\in\mathbb S(E_{d_1})\times\mathbb S(E_{d_2})$, define the joint
logarithmic residual
\begin{equation}
\label{eq:joint-residual}
R_{\mathcal{A}}(u,v)
:=\|L_{\mathcal{A},d_1}u\|^2
+\|L_{\mathcal{A},d_2}v\|^2
\end{equation}
and
\begin{equation}
\label{eq:penalised-alignment}
\Gamma_{\lambda,\beta,\mathcal{A}}(u,v)
:=
\begin{cases}
\displaystyle
\frac{|\langle B(u,v),q_{\mathcal{A}}\rangle_n|^2}
{\|B(u,v)\|_n^2+\lambda R_{\mathcal{A}}(u,v)^\beta},
&\text{if the denominator is positive},\\[1.2em]
0,&\text{if the denominator is zero}.
\end{cases}
\end{equation}
The two cases are exhaustive because $\lambda>0$, $\beta>0$, and both terms in the denominator are nonnegative. In the second case, both the numerator and denominator vanish, so the value zero is an explicit definition at the otherwise undefined $0/0$ base locus.

\begin{definition}[Penalised Saito functional]
\label{def:saito-functional}
The penalised Saito functional associated with the prescribed degree pair
$(d_1,d_2)$ is
\begin{equation}
\label{eq:saito-loss}
\mathfrak S_{\lambda,\beta}(\mathcal{A};d_1,d_2)
=1-
\sup_{\substack{u\in\mathbb S(E_{d_1})\\
 v\in\mathbb S(E_{d_2})}}
\Gamma_{\lambda,\beta,\mathcal{A}}(u,v).
\end{equation}
\end{definition}

The numerator in \eqref{eq:penalised-alignment} measures the alignment of the candidate determinant with the unit Saito direction $q_{\mathcal{A}}$, while the second denominator term penalises failure of logarithmicity. Restricting to unit spheres eliminates reciprocal rescaling of the two inputs. The subquadratic power $\beta<1$ is essential near a common zero of the determinant and the logarithmic residual: for an arrangement that is not free with the selected exponent pair, it makes the penalty dominate the squared determinant and closes the base-locus escape.

\subsubsection{Range, zero locus, and semicontinuity}

Let
\[
\mathcal U_n=
\bigl\{([\alpha_1],\ldots,[\alpha_n])\in(\mathbb P(S_1))^n:
[\alpha_i]\neq[\alpha_j]\ \text{for}\ i\neq j\bigr\}
\]
be the ordered reduced configuration space of $n$-tuples of distinct projective lines, equipped with the Euclidean topology inherited from $(\mathbb P(S_1))^n$.

\begin{theorem}
\label{thm:penalised-saito}
Let $\mathcal{A}$ be a reduced arrangement of $n$ lines and let $d_1,d_2 \in \mathbb{Z}_{\geq0}$, satisfying $d_1+d_2=n-1$. For every $\lambda>0$ and $0<\beta<1$:
\begin{enumerate}[label=(\arabic*)]
\item $0\leq\mathfrak S_{\lambda,\beta}(\mathcal{A};d_1,d_2)\leq1$, and the supremum in its definition is attained;
\item $\mathfrak S_{\lambda,\beta}(\mathcal{A};d_1,d_2)=0$ if and only if $\mathcal{A}$ is free with exponents $(1,d_1,d_2)$;
\item if $\mathcal{A}$ is not free with the prescribed exponent pair, then
\[
0<\mathfrak S_{\lambda,\beta}(\mathcal{A};d_1,d_2)<1;
\]
\item for fixed $(d_1,d_2)$, the functional is upper semicontinuous on $\mathcal U_n$ and is continuous at every arrangement that is free with the prescribed exponent pair;
\item it is nondecreasing in $\lambda$, and
\[
\lim_{\lambda\to\infty}
\mathfrak S_{\lambda,\beta}(\mathcal{A};d_1,d_2)
=
\begin{cases}
0,&\mathcal{A}\text{ is free with exponents }(1,d_1,d_2),\\
1,&\text{otherwise}.
\end{cases}
\]
\end{enumerate}
\end{theorem}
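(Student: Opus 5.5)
Write $\Gamma=\Gamma_{\lambda,\beta,\mathcal{A}}$ and $K=\mathbb S(E_{d_1})\times\mathbb S(E_{d_2})$, which is compact.

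\emph{Part (1).} Cauchy--Schwarz gives $|\langle B,q_{\mathcal{A}}\rangle_n|^2\le\|B\|_n^2$, since $\|q_{\mathcal{A}}\|=1$. Hence $0\le\Gamma\le1$, and so $0\le\mathfrak S\le1$.

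For attainment I would use upper semicontinuity of $\Gamma$ on $K$.
\begin{itemize}
\item Off the base locus $\{B=0,\ R=0\}$, $\Gamma$ is a quotient of continuous functions with positive denominator, hence continuous.
\item At a base point, $\Gamma=0$ while $\Gamma\ge0$ everywhere, so $\limsup\Gamma\ge\Gamma$ fails only if nearby values exceed $0$; but $\Gamma=0$ is the value and we need $\limsup\le 0$. So base points are exactly where care is needed.
\end{itemize}
It is cleaner to avoid this. If the supremum is $0$, any point attains it. If the supremum $s$ is positive, take a maximising sequence $(u_k,v_k)\to(u,v)$ with $\Gamma(u_k,v_k)\to s>0$. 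I claim the limit is not a base point, so continuity there gives $\Gamma(u,v)=s$.

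Suppose instead the limit were a base point. Then $u\in D(\mathcal{A})_{d_1}$ and $v\in D(\mathcal{A})_{d_2}$ by \eqref{eq:residual-kernel}, and $B(u,v)=0$. The key estimate is a {\L}ojasiewicz-type bound near this point:
\[
|\langle B(u',v'),q_{\mathcal{A}}\rangle| \le C\,R_{\mathcal{A}}(u',v')^{1/2}.
\]
To prove it, decompose $u'=u_0+u_1$ with $u_0\in\ker L_{d_1}$ and $u_1\perp\ker L_{d_1}$, so that $\|u_1\|\le c\|L u'\|$; similarly $v'=v_0+v_1$. Then
\[
B(u',v')=B(u_0,v_0)+O(\|u_1\|+\|v_1\|).
\]
Here $B(u_0,v_0)\in\mathbb C Q$ by Lemma~\ref{lem:divisibility-obstruction}.
\begin{itemize}
\item If $\mathcal{A}$ is not free with the pair, $B(u_0,v_0)=0$ by \eqref{eq:kernel-determinant-zero}. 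The numerator is then $O(R)$, while the denominator is at least $\lambda R^\beta$. So $\Gamma=O(R^{1-\beta})\to0$, since $\beta<1$.
\item If $\mathcal{A}$ is free, then near a base point $B(u_0,v_0)=cQ$ with $c$ small, because $B(u,v)=0$. The numerator is at most $(\|B(u_0,v_0)\|+O(R^{1/2}))^2$. This is the obstacle: bounding by $\|B\|^2$ alone gives ratio $\le1$, not small. However, in the free case the supremum equals $1$, and attainment is shown directly in (2). So only the nonfree case needs this argument.
\end{itemize}
This gives attainment in all cases.

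\emph{Parts (2) and (3).} If $\mathcal{A}$ is free, Saito's criterion \eqref{eq:Saito-det} gives $u,v$ in $D(\mathcal{A})$ with $B(u,v)=cQ\ne0$; normalise them to unit length. Then $R=0$ and $B\parallel q_{\mathcal{A}}$, so $\Gamma=1$ and $\mathfrak S=0$.

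Conversely, suppose $\mathcal{A}$ is not free with the pair.
\begin{itemize}
\item At the maximiser, $\Gamma=1$ would require $R=0$ and $B\parallel q_{\mathcal{A}}$ with $B\ne0$. This forces freeness via \eqref{eq:target-pair-Saito}, a contradiction. Hence $\mathfrak S>0$.
\item For $\mathfrak S<1$ we need a point with $\Gamma>0$. Choose unit $u,v$ with $\langle B(u,v),q_{\mathcal{A}}\rangle\ne0$. Such a pair exists because $B$ is bilinear and surjects enough: take $u=\partial$-type constant-coefficient fields times monomials so that $B(u,v)$ hits any chosen monomial combination of $Q$. Concretely, $B$ applied to derivations of degrees $d_1,d_2$ spans $S_n$.
\end{itemize}

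\emph{Part (4).} The map $(\mathcal{A},u,v)\mapsto\Gamma$ is upper semicontinuous on $\mathcal U_n\times K$, using the real-analytic dependence of $L_{\mathcal{A},d}$ and $q_{\mathcal{A}}$ on local lifts. Base points need the uniform version of the estimate above; the failure of that uniformity in the free case is harmless because there $\Gamma\le1=\sup$. A supremum over the compact set $K$ of an upper semicontinuous function is upper semicontinuous. So $\sup\Gamma$ is u.s.c., and therefore $\mathfrak S$ is lower semicontinuous.

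This is the opposite direction from the statement, so the correct route is different. Take sup-attaining points for lower semicontinuity of $\sup\Gamma$ at free arrangements; this gives continuity there, since $\mathfrak S\ge0$ and a fixed certificate $(u,v)$ deforms continuously with $\Gamma$ near $1$. For upper semicontinuity of $\mathfrak S$, i.e.\ lower semicontinuity of $\sup\Gamma$, use that the supremum of continuous functions is l.s.c. Restrict to the region $\{\text{denominator}>0\}$, where $\Gamma$ is continuous in $(\mathcal{A},u,v)$; the supremum over it equals the full supremum, because base-locus values are $0$. Thus $\sup\Gamma$ is l.s.c., hence $\mathfrak S$ is u.s.c. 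This is simple, and with the easy free-case argument settles (4).

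\emph{Part (5).} $\Gamma$ is pointwise nonincreasing in $\lambda$, so $\mathfrak S$ is nondecreasing. In the free case $\mathfrak S=0$ for all $\lambda$. In the nonfree case, let $\lambda_k\to\infty$ with maximisers $(u_k,v_k)$ and $\Gamma_{\lambda_k}\ge\delta>0$. Then $\lambda_k R^\beta\le1/\delta$ forces $R\to0$. Pass to a limit point in the kernel, where $B(u_0,v_0)=0$ by \eqref{eq:kernel-determinant-zero}. The estimate above gives numerator $=O(R)$ and denominator $\ge\lambda_kR^\beta\ge R^\beta$, so $\Gamma\to0$, a contradiction. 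Hence $\mathfrak S\to1$.

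The main obstacle throughout is the local bound $\langle B,q_{\mathcal{A}}\rangle=O(R^{1/2})$ near kernel pairs in the nonfree case, which rests on the projection decomposition and \eqref{eq:kernel-determinant-zero}.
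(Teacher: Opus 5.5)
Your final arguments are correct and are essentially the paper's proof. The same ingredients appear in both:
\begin{itemize}
\item the bound $\|B(u,v)\|_n^2\le C_{\mathcal{A};d_1,d_2}R_{\mathcal{A}}(u,v)$, obtained by projecting onto $\ker L_{\mathcal{A},d_j}=D(\mathcal{A})_{d_j}$ and using $B(D(\mathcal{A})_{d_1},D(\mathcal{A})_{d_2})=0$;
\item the observation that $\Gamma=1$ forces $R_{\mathcal{A}}=0$ and $B\neq0$;
\item monomial spanning of $S_n$ for $\mathfrak S<1$;
\item lower semicontinuity of $\mathcal{A}\mapsto\Gamma_{\lambda,\beta,\mathcal{A}}(u,v)$ for fixed $(u,v)$ in (4).
\end{itemize}
Your maximising-sequence argument in (1) and your contradiction argument in (5) are harmless variants of what the paper does directly. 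The paper shows that this bound makes $\Gamma$ continuous on the whole product of spheres, and that it gives $\sup\Gamma\le CR_{\max}^{1-\beta}/(CR_{\max}^{1-\beta}+\lambda)$. Note that your projection estimate is global, not merely ``near'' a kernel pair, so you never need to pass to limit points.

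\textbf{The first paragraph of your Part (4) must be deleted, not just set aside.} Its claims are false, not merely ``in the opposite direction''. The map $(\mathcal{A},u,v)\mapsto\Gamma$ is not jointly upper semicontinuous, and $\mathfrak S$ is not lower semicontinuous. Here is a counterexample:
\begin{itemize}
\item The near-pencils $\{x,y,x-y,x+y+tz\}$ with $t\neq0$ are free with exponents $(1,1,2)$, so $\mathfrak S(\cdot;1,2)=0$ there.
\item At $t=0$ they degenerate to a pencil, which is not free with that pair, so $\mathfrak S>0$ at the limit.
\item Normalised certificates at $t\neq0$ have $\Gamma=1$. They accumulate at pairs $(u_0,v_0)$ with $R_{\mathcal{A}_0}(u_0,v_0)=0$, hence with $B(u_0,v_0)=0$ by \eqref{eq:kernel-determinant-zero}. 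These are base points of the pencil, where $\Gamma=0$.
\end{itemize}
Your remark that the non-uniformity is harmless ``in the free case'' addresses the wrong situation. The trouble comes from free arrangements near a non-free one, and this is exactly why only upper semicontinuity of $\mathfrak S$ holds.

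Two smaller fixes:
\begin{itemize}
\item In (5), $\Gamma\ge\delta$ gives $\lambda_kR^\beta\le\|B\|_n^2/\delta\le M^2/\delta$ with $M=\|B\|_{\mathrm{bil}}$, not $1/\delta$.
\item In (3), replace the vague phrase ``hits any chosen monomial combination of $Q$'' with the explicit construction. Let $x_j,x_k,x_\ell$ be the three coordinate variables. For a degree-$n$ monomial $\mathbf m=x_j\mathbf p\mathbf q$ with $\deg\mathbf p=d_1$ and $\deg\mathbf q=d_2$, take $u=\mathbf p\,\partial_{x_k}$ and $v=\mathbf q\,\partial_{x_\ell}$. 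Then $B(u,v)=\pm\mathbf m$. Orthogonality of monomials and $q_{\mathcal{A}}\neq0$ then give a pair with $\langle B(u,v),q_{\mathcal{A}}\rangle_n\neq0$.
\end{itemize}
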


\begin{proof}[Proof.]
\leavevmode\par\medskip
\begin{itemize}
\item \emph{Part (1).}
Since $q_{\mathcal{A}}$ is a unit vector, Cauchy--Schwarz gives
\[
|\langle B(u,v),q_{\mathcal{A}}\rangle_n|^2
\leq\|B(u,v)\|_n^2,
\]
so $0\leq\Gamma_{\lambda,\beta,\mathcal{A}}\leq1$ when the denominator is
 positive; in the other case $\Gamma_{\lambda,\beta, \mathcal{A}}=0$ by definition.

If $\mathcal{A}$ is free with exponents $(1,d_1,d_2)$, Saito's criterion provides $u_0\in D(\mathcal{A})_{d_1}$ and $v_0\in D(\mathcal{A})_{d_2}$ such that $B(u_0,v_0)=cQ({\mathcal{A}})$ with $c\neq0$. After normalising $u_0$ and $v_0$, $\|L_{\mathcal{A},d_1}u_0\|^2=\|L_{\mathcal{A},d_2}v_0\|^2=0$, and $\Gamma_{\lambda,\beta,\mathcal{A}}(u_0,v_0)=1$. Thus the supremum is attained.

Now suppose that $\mathcal{A}$ is not free with the prescribed pair. Write
\[
L_j=L_{\mathcal{A},d_j},
\qquad
K_j=\ker L_j=D(\mathcal{A})_{d_j}.
\]
Equation~\eqref{eq:kernel-determinant-zero} gives $B(K_1,K_2)=0$.

For unit vectors $u\in E_{d_1}$ and $v\in E_{d_2}$, write their orthogonal decompositions
\[
u=u_K+u_\perp,
\qquad
v=v_K+v_\perp,
\]
where $u_K\in K_1$, $u_\perp\in K_1^\perp$, $v_K\in K_2$, and $v_\perp\in K_2^\perp$. The restriction
\[
L_j|_{K_j^\perp}:K_j^\perp\longrightarrow \operatorname{im}L_j
\]
is injective: an element of its kernel belongs to both $K_j$ and $K_j^\perp$, and is therefore zero. It is also onto $\operatorname{im}L_j$, since $L_jw=L_jw_\perp$ for the orthogonal decomposition $w=w_K+w_\perp$. Because the spaces are finite-dimensional, this isomorphism has a bounded inverse. Equivalently, when $K_j^\perp\neq\{0\}$, compactness of its unit sphere gives
\[
\sigma_j:=
\min_{\substack{w\in K_j^\perp\\ \|w\|=1}}
\|L_jw\|>0.
\]
Thus, with $c_j=\sigma_j^{-1}$,
\[
\|u_\perp\|\leq c_1\|L_1u_\perp\|
   =c_1\|L_1u\|,
\qquad
\|v_\perp\|\leq c_2\|L_2v_\perp\|
   =c_2\|L_2v\|.
\]
If $K_j^\perp=\{0\}$, set $c_j=0$; then the corresponding estimate is trivial.

Let
\[
M=\|B\|_{\mathrm{bil}}
:=
\sup_{\|a\|=\|b\|=1}\|B(a,b)\|_n.
\]
This number is finite because $B$ is bilinear between finite-dimensional normed spaces. Since $B(u_K,v_K)=0$ by \eqref{eq:kernel-determinant-zero}, bilinearity gives
\[
B(u,v)
=
B(u_K,v_\perp)
+B(u_\perp,v_K)
+B(u_\perp,v_\perp).
\]
Orthogonal projection is norm-decreasing, so all four components have norm at most one. Consequently,
\begin{align*}
\|B(u,v)\|_n
&\leq
M\bigl(
\|u_K\|\|v_\perp\|
+\|u_\perp\|\|v_K\|
+\|u_\perp\|\|v_\perp\|
\bigr)\\
&\leq
\frac{3M}{2}
\bigl(\|u_\perp\|+\|v_\perp\|\bigr)\\
&\leq
\frac{3M}{2}
\bigl(c_1\|L_1u\|+c_2\|L_2v\|\bigr).
\end{align*}
Here we used $ab\leq(a+b)/2$ for $0\leq a,b\leq1$. Squaring and applying Cauchy--Schwarz to the final sum gives
\begin{equation}
\label{eq:B-residual-bound}
\|B(u,v)\|_n^2
\leq
C_{\mathcal{A};d_1,d_2}
\bigl(\|L_1u\|^2+\|L_2v\|^2\bigr)
=
C_{\mathcal{A};d_1,d_2}R_{\mathcal{A}}(u,v),
\end{equation}
where, for example, one may take
\[
C_{\mathcal{A};d_1,d_2}
=
\frac{9M^2}{4}(c_1^2+c_2^2).
\]

If $R_{\mathcal{A}}(u,v)=0$, then $u\in K_1$ and $v\in K_2$, so \eqref{eq:kernel-determinant-zero} gives $B(u,v)=0$; by definition, $\Gamma_{\lambda,\beta,\mathcal{A}}(u,v)=0$. If $R_{\mathcal{A}}(u,v)>0$, then $q_{\mathcal{A}}$ being a unit vector, Cauchy--Schwarz and \eqref{eq:B-residual-bound} give
\begin{align}
0\leq\Gamma_{\lambda,\beta,\mathcal{A}}(u,v)
&=
\frac{|\langle B(u,v),q_{\mathcal{A}}\rangle_n|^2}
{\|B(u,v)\|_n^2+\lambda R_{\mathcal{A}}(u,v)^\beta}
\nonumber\\
&\leq
\frac{\|B(u,v)\|_n^2}
{\lambda R_{\mathcal{A}}(u,v)^\beta}
\nonumber\\
&\leq
\frac{C_{\mathcal{A};d_1,d_2}}{\lambda}
R_{\mathcal{A}}(u,v)^{1-\beta}.
\label{eq:Gamma-base-bound}
\end{align}
Since $0<\beta<1$, the last expression tends to zero as $R_{\mathcal{A}}(u,v)\to0$. Hence, for this fixed arrangement outside the target free locus, the extension of $\Gamma$ by zero is continuous at its base locus. Away from that locus the quotient is already continuous. Therefore $\Gamma$ is continuous on the compact product of unit spheres and attains its maximum. Together with the prescribed-pair free case treated above, this proves part~(1).

\item \emph{Part (2).}
The free implication was proved above. Conversely, suppose that $\mathcal{A}$
is not free with the prescribed pair. If
$R_{\mathcal{A}}(u,v)=0$, then $u$ and $v$ are logarithmic, so
\eqref{eq:kernel-determinant-zero} gives $B(u,v)=0$ and hence, by definition,
$\Gamma_{\lambda,\beta,\mathcal{A}}(u,v)=0$. If
$R_{\mathcal{A}}(u,v)>0$, then
\[
|\langle B(u,v),q_{\mathcal{A}}\rangle_n|^2
\leq \|B(u,v)\|_n^2
<
\|B(u,v)\|_n^2+\lambda R_{\mathcal{A}}(u,v)^\beta,
\]
and therefore
$\Gamma_{\lambda,\beta,\mathcal{A}}(u,v)<1$. Thus $\Gamma<1$ for every
pair $(u,v)$. Since part~(1) shows that its maximum is attained, this maximum
is strictly smaller than one. Consequently,
\[
\mathfrak S_{\lambda,\beta}(\mathcal{A};d_1,d_2)>0.
\]

\item \emph{Part (3).}
We now prove that $\mathfrak S_{\lambda,\beta}(\mathcal{A};d_1,d_2)<1$, or equivalently that $\Gamma_{\lambda,\beta,\mathcal{A}}$ is not identically zero. The linear span of $B(E_{d_1},E_{d_2})$ is all of $S_n$. Indeed, let $\mathbf{m}$ be a monomial of degree $n$, choose a variable $x_j$ dividing $\mathbf{m}$, and write
\[
\mathbf{m}/x_j=\mathbf{pq},
\qquad
\deg \mathbf{p}=d_1,\qquad
\deg \mathbf{q}=d_2.
\]
If $x_k$ and $x_\ell$ are the other two coordinate variables, take
\[
u=\mathbf{p}\partial_{x_k},
\qquad
v=\mathbf{q}\partial_{x_\ell}.
\]
Then $B(u,v)=\pm \mathbf{m}$. Since the monomials form an orthogonal basis of $S_n$
and $q_{\mathcal{A}}\neq0$, some such pair satisfies
\[
\langle B(u,v),q_{\mathcal{A}}\rangle_n\neq0.
\]
After normalising $u$ and $v$, the numerator in \eqref{eq:penalised-alignment} remains nonzero, and therefore
\[
\Gamma_{\lambda,\beta,\mathcal{A}}(u,v)>0.
\]
Hence $\sup\Gamma_{\lambda,\beta,\mathcal{A}}>0$ and
\[
\mathfrak S_{\lambda,\beta}(\mathcal{A};d_1,d_2)<1.
\]


\item \emph{Part (4).}
Fix $\mathcal{A}_0\in\mathcal U_n$. In a neighbourhood of $\mathcal{A}_0$,
choose local continuous unit lifts of the defining line forms. By the
preceding discussion, for every fixed pair $(u,v)$ the quantities
\[
R_{\mathcal{A}}(u,v)
\qquad\text{and}\qquad
\bigl|\langle B(u,v),q_{\mathcal{A}}\rangle_n\bigr|^2
\]
depend continuously on $\mathcal{A}$ and are independent of the chosen local
lifts.

Where the denominator in \eqref{eq:penalised-alignment} is positive,
$\Gamma_{\lambda,\beta,\mathcal{A}}(u,v)$ is therefore continuous in
$\mathcal{A}$. If the denominator vanishes at $\mathcal{A}_0$, then
$\Gamma_{\lambda,\beta,\mathcal{A}_0}(u,v)=0$ by definition. Since $\Gamma$ is
nonnegative, this gives
\[
\Gamma_{\lambda,\beta,\mathcal{A}_0}(u,v)
\leq
\liminf_{\mathcal{A}\to\mathcal{A}_0}
\Gamma_{\lambda,\beta,\mathcal{A}}(u,v),
\]
so $\mathcal{A}\mapsto
\Gamma_{\lambda,\beta,\mathcal{A}}(u,v)$ is lower semicontinuous for every
fixed $(u,v)$.

A supremum of lower-semicontinuous functions is lower semicontinuous. Hence
\[
\mathcal{A}
\longmapsto
\sup_{\substack{u\in\mathbb S(E_{d_1})\\
v\in\mathbb S(E_{d_2})}}
\Gamma_{\lambda,\beta,\mathcal{A}}(u,v)
\]
is lower semicontinuous, and therefore
\[
\mathcal{A}
\longmapsto
\mathfrak S_{\lambda,\beta}(\mathcal{A};d_1,d_2)
\]
is upper semicontinuous on $\mathcal U_n$.

Finally, suppose that $\mathcal{A}_0$ is free with the prescribed exponent
pair. By part~(2),
\[
\mathfrak S_{\lambda,\beta}(\mathcal{A}_0;d_1,d_2)=0.
\]
For every sequence $\mathcal{A}_k\to\mathcal{A}_0$, nonnegativity and upper
semicontinuity give
\[
0
\leq
\liminf_{k\to\infty}
\mathfrak S_{\lambda,\beta}(\mathcal{A}_k;d_1,d_2)
\leq
\limsup_{k\to\infty}
\mathfrak S_{\lambda,\beta}(\mathcal{A}_k;d_1,d_2)
\leq0.
\]
Thus
\[
\mathfrak S_{\lambda,\beta}(\mathcal{A}_k;d_1,d_2)
\longrightarrow0
=
\mathfrak S_{\lambda,\beta}(\mathcal{A}_0;d_1,d_2),
\]
which proves continuity at every arrangement that is free with the prescribed
exponent pair.

\item \emph{Part (5).}
For every fixed pair $(u,v)$, the quantity
$\Gamma_{\lambda,\beta,\mathcal{A}}(u,v)$ is nonincreasing in $\lambda$,
because increasing $\lambda$ can only increase the denominator in
\eqref{eq:penalised-alignment}. Hence
\[
\lambda
\longmapsto
\sup_{\substack{u\in\mathbb S(E_{d_1})\\
v\in\mathbb S(E_{d_2})}}
\Gamma_{\lambda,\beta,\mathcal{A}}(u,v)
\]
is nonincreasing, and therefore
$\mathfrak S_{\lambda,\beta}(\mathcal{A};d_1,d_2)$ is nondecreasing in
$\lambda$.

If $\mathcal{A}$ is free with exponents $(1,d_1,d_2)$, then part~(2) gives
\[
\mathfrak S_{\lambda,\beta}(\mathcal{A};d_1,d_2)=0
\]
for every $\lambda>0$.

Suppose now that $\mathcal{A}$ is not free with the prescribed exponent pair,
and abbreviate
\[
C=C_{\mathcal{A};d_1,d_2}.
\]
Since $R_{\mathcal{A}}$ is continuous on the compact product of unit spheres,
the number
\[
R_{\max}
:=
\max_{\substack{u\in\mathbb S(E_{d_1})\\
v\in\mathbb S(E_{d_2})}}
R_{\mathcal{A}}(u,v)
\]
is finite.

Fix a unit pair $(u,v)$ and write
\[
R=R_{\mathcal{A}}(u,v).
\]
If $R=0$, then \eqref{eq:B-residual-bound} gives $B(u,v)=0$, and hence
$\Gamma_{\lambda,\beta,\mathcal{A}}(u,v)=0$ by definition. If $R>0$, then
Cauchy--Schwarz and \eqref{eq:B-residual-bound} give
\begin{align*}
\Gamma_{\lambda,\beta,\mathcal{A}}(u,v)
&\leq
\frac{\|B(u,v)\|_n^2}
{\|B(u,v)\|_n^2+\lambda R^\beta}\\
&\leq
\frac{CR}{CR+\lambda R^\beta}\\
&=
\frac{CR^{1-\beta}}
{CR^{1-\beta}+\lambda}\\
&\leq
\frac{CR_{\max}^{1-\beta}}
{CR_{\max}^{1-\beta}+\lambda}.
\end{align*}
For the second inequality, we used
$\|B(u,v)\|_n^2\leq CR$ and the fact that
\[
x\longmapsto\frac{x}{x+\lambda R^\beta}
\]
is increasing for $x\geq0$. The final inequality follows from
$0<\beta<1$ and $R\leq R_{\max}$.

The resulting bound is independent of $(u,v)$. Consequently,
\[
0
\leq
\sup_{\substack{u\in\mathbb S(E_{d_1})\\
v\in\mathbb S(E_{d_2})}}
\Gamma_{\lambda,\beta,\mathcal{A}}(u,v)
\leq
\frac{CR_{\max}^{1-\beta}}
{CR_{\max}^{1-\beta}+\lambda}
\longrightarrow0
\]
as $\lambda\to\infty$. Therefore
\[
\mathfrak S_{\lambda,\beta}(\mathcal{A};d_1,d_2)
=
1-
\sup_{\substack{u\in\mathbb S(E_{d_1})\\
v\in\mathbb S(E_{d_2})}}
\Gamma_{\lambda,\beta,\mathcal{A}}(u,v)
\longrightarrow1.
\]

\end{itemize}
\end{proof}


\begin{corollary}
\label{corollary:envelope-functional}
 If $\mathscr D$ is a nonempty subset of $\{(d_1,d_2) \in \mathbb{Z}_{\geq0}^2 : d_1+d_2=n-1\}$, the envelope
\begin{equation}
\label{eq:pair-envelope}
\mathfrak S^{\mathscr D}_{\lambda,\beta}(\mathcal{A})
=\min_{(d_1,d_2)\in\mathscr D}
\mathfrak S_{\lambda,\beta}(\mathcal{A};d_1,d_2)
\end{equation}
is upper semicontinuous and vanishes precisely when $\mathcal{A}$ is free with one of the admissible pairs. 
\end{corollary}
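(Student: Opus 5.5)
The corollary follows directly from Theorem~\ref{thm:penalised-saito}, applied pair by pair, using the fact that $\mathscr D$ is finite. The admissible set $\{(d_1,d_2)\in\mathbb Z_{\geq0}^2: d_1+d_2=n-1\}$ has exactly $n$ elements. So $\mathscr D$ is a nonempty finite set, the minimum in \eqref{eq:pair-envelope} is a minimum over finitely many real numbers, and it is well defined and attained. By part~(1) of Theorem~\ref{thm:penalised-saito}, each term lies in $[0,1]$, so the envelope also lies in $[0,1]$.

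\textbf{Upper semicontinuity.} For each $(d_1,d_2)\in\mathscr D$, part~(4) of Theorem~\ref{thm:penalised-saito} shows that $\mathcal{A}\mapsto\mathfrak S_{\lambda,\beta}(\mathcal{A};d_1,d_2)$ is upper semicontinuous on $\mathcal U_n$. A pointwise minimum of finitely many (indeed, any family of) upper semicontinuous functions is upper semicontinuous. To see this, take a threshold $t$. Then
\[
\{\mathcal{A}:\min_{\mathscr D}\mathfrak S_{\lambda,\beta}(\mathcal{A};d_1,d_2)<t\}=\bigcup_{(d_1,d_2)\in\mathscr D}\{\mathcal{A}:\mathfrak S_{\lambda,\beta}(\mathcal{A};d_1,d_2)<t\},
\]
which is a union of open sets and hence open. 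This is exactly upper semicontinuity of the envelope.

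\textbf{Zero locus.} Because each term is nonnegative and the minimum is attained, the envelope vanishes if and only if $\mathfrak S_{\lambda,\beta}(\mathcal{A};d_1,d_2)=0$ for some $(d_1,d_2)\in\mathscr D$. By part~(2), this holds if and only if $\mathcal{A}$ is free with exponents $(1,d_1,d_2)$ for some admissible pair in $\mathscr D$.

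I do not expect a real obstacle. The only points needing care are the finiteness of $\mathscr D$, which guarantees the minimum is attained, and the direction of semicontinuity: minima preserve upper semicontinuity, whereas suprema would not. Both are routine checks.
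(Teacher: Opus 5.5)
Your proof is correct and matches the argument the paper intends. The paper states the corollary without a separate proof, as an immediate consequence of Theorem~\ref{thm:penalised-saito}: a finite minimum preserves the upper semicontinuity from part~(4), and nonnegativity from part~(1) together with the zero-locus characterisation in part~(2) gives the vanishing statement.
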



\begin{remark}[Normalisation and invariance]
The zero locus in Theorem~\ref{thm:penalised-saito} is intrinsic, but positive values depend on the Hermitian metric, $\lambda$, and $\beta$. The Bombieri--Weyl construction is invariant under unitary coordinate changes, multiplication of each unit defining form by a scalar of modulus one, and line permutation. It is not invariant under arbitrary $\operatorname{PGL}_3(\mathbb C)$ changes and should therefore be described as a bounded coordinate-normalised residual energy, not as a projective invariant or a metric distance to the free locus. For a real arrangement, optimisation over complex coefficient vectors can only decrease the energy relative to optimisation over the real unit spheres; the two versions nevertheless have the same zero locus.
\end{remark}
\begin{remark}[Classical families]
Theorem~\ref{thm:penalised-saito} applies uniformly to the standard free
families. In particular, pencils, supersolvable arrangements, and reflection
arrangements have functional value zero at their exponent pair. For every
arrangement that is not free with the prescribed pair, the functional lies
strictly between zero and one. The searches described below exclude pencils
and near-pencils. For Ziegler-type pairs and, more generally, for distinct
realisations of the same intersection lattice, positive values need not
agree, since the functional depends on the realisation and the chosen
Hermitian normalisation; only its vanishing has the exact freeness
interpretation.
\end{remark}
\begin{remark}[Numerical approximation]
The supremum in \eqref{eq:saito-loss} is a nonconvex optimisation problem. If a finite-restart optimiser returns a feasible value $\widehat\Gamma$, then, in exact arithmetic,
\[
\widehat\Gamma\leq\sup\Gamma,
\qquad
\widehat{\mathfrak S}:=1-\widehat\Gamma
\geq\mathfrak S_{\lambda,\beta}.
\]
Thus the computed energy is an upper approximation to the ideal functional. A large computed value does not prove non-freeness, while a small value only selects a candidate for exact verification. Every reported discovery is certified over its exact coefficient field by finding
$u\in D(\mathcal{A})_{d_1}$ and $v\in D(\mathcal{A})_{d_2}$ such that $B(u,v)=cQ({\mathcal{A}})$ with $c\neq0$.
\end{remark}

\subsection{Energy-guided fixed-cardinality search}
\label{subsec:sequential-construction}

The search is carried out at a prescribed cardinality $n\geq2$. Its basic move replaces one line by another:
\begin{equation}
\label{eq:line-replacement}
\mathcal{A}'
=
(\mathcal{A}\setminus\{L_-\})\cup\{L_+\}.
\end{equation}
Separate campaigns are run over the exact fields
\begin{equation}
\label{eq:search-fields}
\mathscr K=
\left\{
\mathbb{Q},
\mathbb{Q}(\sqrt2),
\mathbb{Q}(\sqrt3),
\mathbb{Q}(\sqrt5),
\mathbb{Q}(\sqrt{-1}),
\mathbb{Q}(\sqrt{-3})
\right\}.
\end{equation}

For each $K\in\mathscr K$, replacement lines are generated exactly over $K$ from a finite, state-dependent proposal set. This set combines a fixed pool of lines of bounded coefficient height with lines passing through pairs of intersection points of $\mathcal{A}\setminus\{L_-\}$. Thus the pool available after selecting $L_-$ depends on the current arrangement. Projective proportionality, incidences and $b_2$ are computed exactly over $K$. For the numerical evaluation of the penalised Saito functional, we fix an embedding $K\hookrightarrow\mathbb C$. This choice may affect positive numerical values of the functional, but it does not affect the existence of an exact Saito certificate over $K$.

A proposed state is called \emph{admissible} if it consists of $n$ distinct lines, is essential, and has maximal intersection multiplicity at most $n-2$. These conditions exclude pencils and near-pencils. In campaigns directed towards a prescribed multiplicity gap, admissibility may additionally impose a fixed ceiling
\[
\max_{X\in \mathcal{L}_2(\mathcal{A})}m_X\leq M_0.
\]
A proposal violating any of these hard conditions is rejected before numerical evaluation.

Fix a target pair
\[
d_1,d_2\in\mathbb{Z}_{\geq0},
\qquad
d_1+d_2=n-1,
\]
and write $b_2^*=b_2^*(n;d_1,d_2)$ as in \eqref{eq:b2-target}. The proposal distribution is biased towards the necessary $b_2$-shell. Candidate replacements are stratified according to their exactly computed effect on $b_2$: priority is given to proposals satisfying
\[
b_2(\mathcal{A}')=b_2^*,
\]
while a secondary tier allows
\[
|b_2(\mathcal{A}')-b_2^*|\leq2.
\]
This proposal-level use of $b_2$ is distinct from the auxiliary discrepancy term in \eqref{eq:production-energy}: the former shapes the candidates presented to the search engines, whereas the latter enters only the selection or acceptance rules of the energy-based engines.

We use simulated annealing~\cite{Kirkpatrick1983}, greedy search~\cite{Cuntz2021}, a random-walk baseline~\cite{Solis1981}, and MAP--Elites~\cite{mouret2015illuminatingsearchspacesmapping,Cully2018-nm}. These methods share the same replacement mechanism, numerical evaluator and exact-certification procedure, but use the numerical values differently in their transition and retention rules.

For the energy-based engines, the ideal composite energy is
\begin{equation}
\label{eq:ideal-search-energy}
\mathcal E(\mathcal{A};d_1,d_2)
:=
\mathfrak S_{1,\,3/4}(\mathcal{A};d_1,d_2)
+w_{b_2}
\frac{|b_2(\mathcal{A})-b_2^*|}{b_2^*},
\qquad
w_{b_2}=0.05.
\end{equation}
Numerically, the supremum defining the first term is approximated by a finite multistart maximisation of $\Gamma_{1,3/4,\mathcal{A}}$. If $\widehat{\Gamma}_{\mathcal{A};d_1,d_2}$ denotes the largest feasible value found, set
\[
\widehat{\mathfrak S}^{\,\mathrm{raw}}_{1,3/4}
(\mathcal{A};d_1,d_2)
=
1-\widehat{\Gamma}_{\mathcal{A};d_1,d_2}.
\]
Here \emph{raw} means that no further rescaling or calibration is applied. The reported evaluations use real coefficient vectors for $\mathbb Q$ and the real quadratic fields, and complex coefficient vectors for the imaginary quadratic fields.

The corresponding computed composite energy is
\begin{equation}
\label{eq:production-energy}
\widehat{\mathcal E}(\mathcal{A};d_1,d_2)
:=
\widehat{\mathfrak S}^{\,\mathrm{raw}}_{1,3/4}
(\mathcal{A};d_1,d_2)
+0.05
\frac{|b_2(\mathcal{A})-b_2^*|}{b_2^*}.
\end{equation}
The second term depends only on the intersection lattice. It gives a weak additional preference for the necessary $b_2$-shell, while the slack proposal tier still permits nearby off-shell states.

Greedy search and simulated annealing use $\widehat{\mathcal E}$ in their selection or acceptance rules. The random walk accepts admissible proposals without consulting the energy and serves as a baseline. MAP--Elites organises arrangements into cells determined by combinatorial descriptors and retains elites according to the order of magnitude of the raw penalised Saito loss, with additional preferences for non-supersolvable representatives, smaller raw loss, and smaller coordinate height. 

In exact arithmetic, the value of $\Gamma$ at every feasible numerical iterate satisfies
\[
\widehat{\Gamma}_{\mathcal{A};d_1,d_2}
\leq
\sup_{\substack{u\in\mathbb S(E_{d_1})\\
 v\in\mathbb S(E_{d_2})}}
\Gamma_{1,3/4,\mathcal{A}}(u,v),
\]
and hence
\[
\widehat{\mathfrak S}^{\,\mathrm{raw}}_{1,3/4}
(\mathcal{A};d_1,d_2)
\geq
\mathfrak S_{1,3/4}(\mathcal{A};d_1,d_2).
\]
For the floating-point computation, this inequality is understood up to numerical evaluation error. Objective values that violate the theoretical range beyond the prescribed numerical tolerance are re-evaluated using more accurate arithmetic; unresolved numerical failures are discarded rather than interpreted as loss values. Thus a small computed loss identifies a promising candidate, whereas a large computed loss does not prove non-freeness, since the numerical maximisation may fail to locate the global supremum.

\begin{algorithm}[t]
\caption{Line-replacement search at fixed $(n,d_1,d_2)$}
\label{alg:general-search}
\begin{algorithmic}[1]
\Require an exact field $K\in\mathscr K$, an initial collection
$\mathscr X$ of admissible $n$-line arrangements, a target pair
$d_1+d_2=n-1$, a search engine, and a search budget $T$
\Ensure a collection $\mathscr A$ of exactly certified free arrangements
\State $\mathscr A\gets\varnothing$
\While{the search budget has not been exhausted}
  \State Select a parent $\mathcal{A}\in\mathscr X$ and propose one or more
  replacements
  \[
  \mathcal{A}'
  =
  (\mathcal{A}\setminus\{L_-\})\cup\{L_+\}
  \]
  according to the chosen engine and the $b_2$-biased proposal rule.
  \For{each proposed arrangement $\mathcal{A}'$}
  \If{$\mathcal{A}'$ is admissible}
  \State Evaluate
  $\widehat{\mathfrak S}^{\,\mathrm{raw}}_{1,3/4}
  (\mathcal{A}';d_1,d_2)$ and $b_2(\mathcal{A}')$.
  \State If required by the engine, compute
  $\widehat{\mathcal E}(\mathcal{A}';d_1,d_2)$.
  \If{$\widehat{\mathfrak S}^{\,\mathrm{raw}}_{1,3/4}
  (\mathcal{A}';d_1,d_2)<10^{-6}$ and its intersection-lattice
  fingerprint has not previously been attempted in the
current campaign unit}
 \If{$\mathcal{A}'$ satisfies Saito's criterion with degrees
 $(d_1,d_2)$ exactly over $K$}
 \State Add $\mathcal{A}'$ and its certificate to
 $\mathscr A$.
 \EndIf
  \EndIf
  \EndIf
  \EndFor
  \State Update the current state or archive according to the chosen engine.
\EndWhile
\State \Return $\mathscr A$.
\end{algorithmic}
\end{algorithm}

The common structure of the search is summarised in Algorithm~\ref{alg:general-search}. Here $T$ denotes a generic search budget, which may be specified in terms of wall time, iterations, generations, or numerical evaluations, depending on the engine.


Each individual search stage preserves the cardinality $n$. To reach larger cardinalities, some campaigns are organised into cascades: a certified arrangement is extended by one line and used to initialise a new fixed-cardinality search at $n+1$, after which the replacement procedure resumes. The penalised Saito functional and the combinatorial proposal bias therefore guide the exploration, while exact Saito certification provides the final correctness guarantee.

\section{Certified free arrangements with large multiplicity gaps}
\label{sec:experimental_results}

\subsection{The multiplicity gap and Terao's conjecture}

Let $\mathcal{A}$ be a free arrangement of $n$ lines with exponents
\[
(1,d_1,d_2),
\qquad
d_1\leq d_2,
\]
and let
\[
m(\mathcal{A})
=
\max_{X\in \mathcal{L}_2(\mathcal{A})}m_X
\]
denote its maximal intersection multiplicity. Following Dimca--K\"uhne--Pokora~\cite{Dimca2026freelinearrangementslow}, we consider the integer
\begin{equation}
\label{eq:multiplicity-gap}
\epsilon(\mathcal{A})
=
d_1-m(\mathcal{A}).
\end{equation}
We call $\mathcal{A}$ a \emph{large-gap arrangement} when
$\epsilon(\mathcal{A})\geq2$.

For the comparison with Terao's conjecture, suppose that
\[
\epsilon(\mathcal{A})>0.
\]
By definition,
\[
d_1=m(\mathcal{A})+\epsilon(\mathcal{A}).
\]
Moreover, since $d_1\leq d_2$ and $d_1+d_2=n-1$, we have
\begin{align*}
n-m(\mathcal{A})-d_1
&=
d_2+1-m(\mathcal{A})\\
&\geq
d_1+1-m(\mathcal{A})\\
&=
\epsilon(\mathcal{A})+1
>0.
\end{align*}
Consequently,
\[
d_1
=
m(\mathcal{A})+\epsilon(\mathcal{A})
<
n-m(\mathcal{A}),
\]
so the hypotheses of the comparison below are satisfied. In particular,
these hypotheses hold for every large-gap arrangement.

Suppose now that $\overline{\mathcal{A}}$ is another realisation of the same
intersection lattice as $\mathcal{A}$, but that $\overline{\mathcal{A}}$ is not
free. Since maximal multiplicity is determined by the intersection lattice,
\[
m(\overline{\mathcal{A}})=m(\mathcal{A}).
\]
Let
\[
\overline d_1
=
\operatorname{mdr}(Q_{\overline{\mathcal{A}}})
:=
\min\left\{
s\geq0:
\operatorname{AR}(Q_{\overline{\mathcal{A}}})_s\neq0
\right\}
\]
be the minimal degree of a Jacobian relation of
$Q_{\overline{\mathcal{A}}}$. The comparison in
\cite{Dimca2026freelinearrangementslow} then gives
\begin{equation}
\label{eq:terao-gap-window}
m(\mathcal{A})
\leq
\overline d_1
<
m(\mathcal{A})+\epsilon(\mathcal{A}).
\end{equation}
Thus $\overline d_1$ would have to belong to
\[
\bigl\{
m(\mathcal{A}),\,
m(\mathcal{A})+1,\ldots,\,
m(\mathcal{A})+\epsilon(\mathcal{A})-1
\bigr\}.
\]
Because $\epsilon(\mathcal{A})$ is a positive integer, this set contains
exactly $\epsilon(\mathcal{A})$ possible values. This is the precise sense in
which a larger value of $\epsilon$ leaves more ``space'' for a possible
counterexample to Terao's conjecture: it enlarges the arithmetic range of
values not excluded for
$\operatorname{mdr}(Q_{\overline{\mathcal{A}}})$. It does not refer to the
dimension of the realisation space, nor does it imply that a counterexample
is likely to exist. The arrangements presented below supply only the free
side of this comparison. To obtain a counterexample to Terao's
conjecture~\cite{Terao80}, one would still need to exhibit a non-free
realisation having the same intersection lattice as one of them.

Large-gap arrangements are automatically non-supersolvable. Indeed, if a rank-three supersolvable arrangement has a modular point of multiplicity $r$, then its exponents are
\[
(1,r-1,n-r)
\]
up to ordering; see \cite[Theorem~4.58]{Orlik1992}. Consequently,
\[
d_1\leq r-1\leq m(\mathcal{A})-1,
\]
and hence $\epsilon(\mathcal{A})<0$. Therefore $\epsilon(\mathcal{A})\geq2$ already excludes supersolvable families, although it does not by itself imply that an arrangement is not inductively, recursively, or divisionally free.

More generally, Dimca--K\"uhne--Pokora prove
\cite[Proposition~3.1]{Dimca2026freelinearrangementslow} that a free
arrangement satisfying
\[
d_1=m+\epsilon,
\qquad
m=m(\mathcal{A}),
\qquad
\epsilon\geq0,
\]
must satisfy
\begin{equation}
\label{eq:DKP-bounds}
2m+2\epsilon+1
\leq n
\leq
\frac{m(m+2+\epsilon)}{2}.
\end{equation}
We shall call an arrangement \emph{lower-bound extremal} when equality holds
on the left of \eqref{eq:DKP-bounds}. Since
$d_1+d_2=n-1$, this equality gives
\[
n-1=2m+2\epsilon=2d_1,
\]
and hence
\[
d_1=d_2=m+\epsilon.
\]

\subsection{Certified arrangements found by the search}

Every arrangement counted in this section was verified over its exact coefficient field using Saito's criterion. More precisely, for each retained arrangement we computed homogeneous logarithmic derivations
\[
\theta_1\in D(\mathcal{A})_{d_1},
\qquad
\theta_2\in D(\mathcal{A})_{d_2},
\]
and verified coefficientwise that
\begin{equation}
\label{eq:results-exact-saito}
B(\theta_1,\theta_2)
=
cQ(\mathcal{A}),
\qquad
c\neq0.
\end{equation}
The penalised Saito loss and the $b_2$-shell condition were used only to select promising candidates; neither a small numerical loss nor the factorisation of the characteristic polynomial was accepted as a proof of
freeness.

The complete collection was subsequently audited anew from its exact data. For each of the $6{,}146$ retained representatives, we reconstructed the arrangement over its declared coefficient field, checked the field consistency of the certificate, re-established the logarithmicity of both derivations, verified \eqref{eq:results-exact-saito} symbolically, and recomputed the coloured line--point incidence graph. All $6{,}146$ representatives passed these checks, with no failures.

The certified discoveries yield $6{,}146$ distinct Weisfeiler--Leman fingerprints\footnote{We encode each intersection lattice by its coloured line--point incidence graph and associate with it a Weisfeiler--Leman fingerprint~\cite{Kiefer2020}. The fingerprint is invariant under graph isomorphism but is not a complete isomorphism invariant: distinct fingerprints certify non-isomorphic incidence graphs, whereas equal fingerprints need not certify isomorphism. Consequently, unless complete graph-isomorphism comparisons are also performed, the number of distinct fingerprints gives a lower bound for the number of lattice-isomorphism types occurring among the discoveries.}, and include arrangements with cardinalities up to $n=28$. One representative of each fingerprint is retained. Thus at least $6{,}146$ lattice-isomorphism types occur among the certified discoveries, although this is not claimed to be a complete
classification.

The coefficient fields of the retained representatives are distributed as
follows:

\[
\begin{array}{c|r|r|r|r|r|r}
\text{Coefficient field}
&
~\mathbb{Q}~
&
~\mathbb{Q}(\sqrt{-3})~
&
~\mathbb{Q}(\sqrt3)~
&
~\mathbb{Q}(\sqrt{-1})~
&
~\mathbb{Q}(\sqrt5)~
&
~\mathbb{Q}(\sqrt2)~
\\ \hline
\#\text{ representatives}
&
5{,}808&160&90&82&5&1
\end{array}
\]

These are fields over which the retained representatives are defined; they are not claimed to be their minimal fields of definition. 

Among the retained representatives, $3{,}012$ satisfy $\epsilon(\mathcal{A})\geq2$; their distribution is shown in Table~\ref{tab:epsilon-distribution}. Since exactly one representative is retained per Weisfeiler--Leman fingerprint, and distinct fingerprints certify non-isomorphic coloured incidence graphs, each count in Table~\ref{tab:epsilon-distribution} is the exact number of pairwise non-isomorphic intersection lattices among the retained representatives in that row. As a consistency check on the fingerprint implementation, all retained large-gap representatives were additionally compared by exact isomorphism tests of their coloured line--point incidence graphs within each fixed $(\epsilon,n,d_1,d_2)$ class; as expected, no coincidences were found. Among all certified discoveries, these counts remain lower bounds on the number of lattice-isomorphism types, since a discovery discarded for sharing a fingerprint with a retained representative need not be isomorphic to it.

\begin{table}[ht]
\centering
\caption{Distribution of the $3{,}012$ retained certified large-gap representatives. Because exactly one representative is retained per distinct Weisfeiler--Leman fingerprint, each row counts pairwise non-isomorphic intersection lattices among the retained representatives. As an independent consistency check, pairwise graph-isomorphism tests within each fixed $(\epsilon,n,d_1,d_2)$ class found no coincidences.}

\label{tab:epsilon-distribution}
\begin{tabular}{c|r|c}
\hline
$\epsilon$
&
count
&
observed cardinalities $n$
\\
\hline
$2$ & $1{,}132$ & $14$--$22$ and $24$--$25$ \\
$3$ & $1{,}068$ & $17$--$26$ \\
$4$ & $522$ & $19$--$27$ \\
$5$ & $215$ & $23$--$28$ \\
$6$ & $71$ & $25$--$28$ \\
$7$ & $4$  & $27$\\
\hline
\end{tabular}
\end{table}

These counts describe the output of a targeted search and should not be interpreted as frequencies among all line arrangements. Moreover, many of the larger arrangements were obtained through successive extensions of a small number of initial families. The counts therefore measure the combinatorial types found by the search, rather than their prevalence or independence in realisation space. Nevertheless, the results show that the targeted search can produce many exactly certified examples in the large-gap regime.

\subsection{Arrangements attaining the lower bound}

The certified collection contains lower-bound extremal representatives for $21$ distinct parameter pairs $(m,\epsilon)$ at $n=2m+2\epsilon+1$. Every such arrangement necessarily has balanced exponents. 

Among the largest-gap extremal parameter pairs found by the search are
\[
\begin{array}{c|c|c|c|r}
~n~&~m~&~\epsilon~&~(d_1,d_2)~&~\text{retained count}~\\
\hline
17&5&3&(8,8)&23\\
19&5&4&(9,9)&5\\
23&6&5&(11,11)&43\\
25&6&6&(12,12)&38\\
27&6&7&(13,13)&4
\end{array}
\]
For all five rows, the displayed numbers are exact: the retained representatives within each class were checked pairwise by explicit graph isomorphism of their coloured incidence graphs, with no coincidences found, so each count equals the number of pairwise non-isomorphic intersection lattices among the retained representatives.

In particular, the collection contains extremal arrangements with $m=6$ attaining the lower bound consecutively for $\epsilon=5,6,7$, at cardinalities $n=23,25,27$, respectively. In each row, the displayed cardinality is the smallest permitted by \eqref{eq:DKP-bounds} for the specified pair $(m,\epsilon)$. This does not assert global minimality among all arrangements having the
same value of $\epsilon$.

\begin{example}[A rational lower-bound extremal arrangement with multiplicity gap seven]
\label{ex:epsilon-seven-rational}

Let $[x:y:z]$ be homogeneous coordinates on $\mathbb P^2$, and define
\[
\mathcal{A}_{27}=\{L_1,\ldots,L_{27}\}
\subset\mathbb P^2_{\mathbb Q},
\]
where the lines are as follows\footnote{The complete certificate for this arrangement is recorded in the accompanying
database under the Weisfeiler--Leman fingerprint
\texttt{1a1488047e120a539063022fff9d160f}.}:
\begin{center}
\small
\renewcommand{\arraystretch}{1.12}
\begin{tabular}{r@{\;}l@{\qquad}r@{\;}l}
$L_1=$ & $V(x-5y+5z)$
&
$L_{15}=$ & $V(x-35y+4z)$
\\
$L_2=$ & $V(x-7y+4z)$
&
$L_{16}=$ & $V(x-6y+\frac92z)$
\\
$L_3=$ & $V(x-9y+3z)$
&
$L_{17}=$ & $V(x-15y+\frac73z)$
\\
$L_4=$ & $V(x+5y+\frac{13}{2}z)$
&
$L_{18}=$ & $V(x-3y+6z)$
\\
$L_5=$ & $V(x-25y+9z)$
&
$L_{19}=$ & $V(x-10y+\frac{29}{6}z)$
\\
$L_6=$ & $V(y+\frac12z)$
&
$L_{20}=$ & $V(x+\frac52y+\frac{21}{4}z)$
\\
$L_7=$ & $V(x-5y+\frac32z)$
&
$L_{21}=$ & $V(x-5y+\frac{53}{12}z)$
\\
$L_8=$ & $V(x+4z)$
&
$L_{22}=$ & $V(x-\frac{45}{7}y+\frac{23}{7}z)$
\\
$L_9=$ & $V(x-\frac{25}{7}y+\frac{33}{7}z)$
&
$L_{23}=$ & $V(x-10y+\frac{27}{8}z)$
\\
$L_{10}=$ & $V(x-\frac{25}{3}y+\frac{17}{3}z)$
&
$L_{24}=$ & $V(x-5y+\frac{22}{3}z)$
\\
$L_{11}=$ & $V(x-45y-z)$
&
$L_{25}=$ & $V(x-15y+\frac{21}{4}z)$
\\
$L_{12}=$ & $V(x-5y+4z)$
&
$L_{26}=$ & $V(x-\frac52y+\frac{11}{4}z)$
\\
$L_{13}=$ & $V(x-\frac{15}{2}y+\frac{37}{8}z)$
&
$L_{27}=$ & $V(x-\frac{35}{3}y+4z)$
\\
$L_{14}=$ & $V(x-\frac{35}{4}y+4z)$
&
&
\end{tabular}
\end{center}
\begin{figure}[H]
  \centering
  \includegraphics[width=0.6\linewidth]{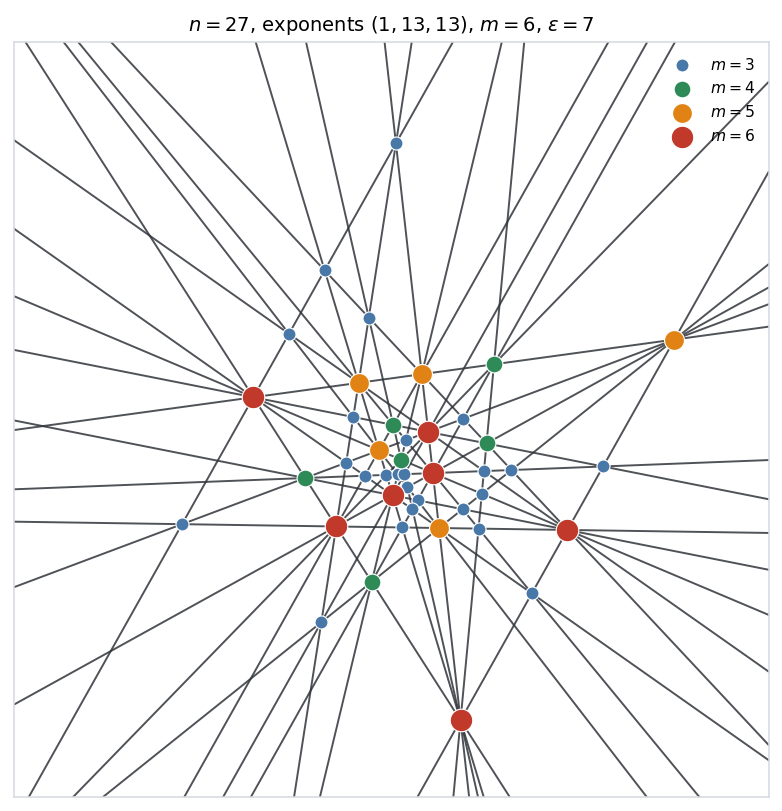}
  \caption{An affine drawing of the projective arrangement $\mathcal{A}_{27}$ in the chart $z=1$. This arrangement satisfies $m(\mathcal{A}_{27})=6$ and $\epsilon(\mathcal{A}_{27})=7$.}
  \label{fig:A27}
\end{figure}

Writing $L_i=V(\alpha_i)$, set
\[
Q({\mathcal{A}_{27}})=\prod_{i=1}^{27}\alpha_i.
\]

An exact incidence computation gives
\begin{equation}
\label{eq:epsilon-seven-profile}
(t_2,t_3,t_4,t_5,t_6)
=
(57,29,7,6,7),
\end{equation}
with $t_r=0$ for $r\geq7$. In particular,
\[
m(\mathcal{A}_{27})=6.
\]
As a consistency check, every pair of lines is accounted for:
\[
\sum_{r\geq2}\binom{r}{2}t_r
=
57+3\cdot29+6\cdot7+10\cdot6+15\cdot7
=
351
=
\binom{27}{2}.
\]
Moreover,
\[
\begin{aligned}
b_2(\mathcal{A}_{27})
&=
\sum_{r\geq2}(r-1)t_r\\
&=
57+2\cdot29+3\cdot7+4\cdot6+5\cdot7\\
&=
195
=
(27-1)+13^2.
\end{aligned}
\]
Consequently,
\[
\chi_{\mathcal{A}_{27}}(t)
=
(t-1)(t^2-26t+169)
=
(t-1)(t-13)^2.
\]

These combinatorial identities are necessary consistency checks, but freeness is established by an exact Saito certificate. More precisely,
there exist
\[
\theta_1,\theta_2\in D(\mathcal{A}_{27})_{13}
\]
such that
\[
B(\theta_1,\theta_2)
=
\frac{128625}{875446}\,Q({\mathcal{A}_{27}}).
\]
Since the scalar is nonzero, Theorem~\ref{thm:Saito-criterion} shows that $\mathcal{A}_{27}$ is free with exponents $(1,13,13)$. Its multiplicity gap is
\[
\epsilon(\mathcal{A}_{27})
=
13-m(\mathcal{A}_{27})
=
13-6
=
7.
\]
Furthermore,
\[
27=2\cdot6+2\cdot7+1,
\]
so $\mathcal{A}_{27}$ attains the lower bound in
\eqref{eq:DKP-bounds}. It is therefore lower-bound extremal, with balanced
exponents. It is also non-supersolvable, since a supersolvable free line
arrangement must satisfy
\[
d_1\leq m(\mathcal{A})-1.
\]

For any hypothetical non-free realisation $\overline{\mathcal{A}}_{27}$ having the same intersection lattice as $\mathcal{A}_{27}$,
\eqref{eq:terao-gap-window} would require
\[
6
\leq
\operatorname{mdr}\bigl(Q({\overline{\mathcal{A}}_{27}})\bigr)
<
13,
\]
or equivalently
\[
\operatorname{mdr}\bigl(Q({\overline{\mathcal{A}}_{27}})\bigr)
\in
\{6,7,8,9,10,11,12\}.
\]
This seven-value interval is the widest such window represented in the present collection.

\end{example}

The examples found here therefore provide a collection of exactly certified, non-supersolvable free arrangements in the arithmetic regime highlighted by Dimca--K\"uhne--Pokora~\cite{Dimca2026freelinearrangementslow}. They supply only the free side of the comparison relevant to Terao's conjecture~\cite{Terao80}. To obtain a counterexample, one would still need to exhibit a non-free realisation having the same intersection lattice as one of them and prove its non-freeness exactly.

\section{Conclusion and future perspectives}
\label{sec:conclusion}
We introduced the penalised Saito functional
$\mathfrak S_{\lambda,\beta}(\mathcal{A};d_1,d_2)$ as a bounded search energy
on the space of reduced line arrangements, for each prescribed pair
$d_1+d_2=n-1$. The construction optimises the Saito determinant over
derivations of the prescribed degrees while penalising their failure to be
logarithmic, measured by canonical residual operators associated with the
Bombieri--Weyl Hermitian structure. Theorem~\ref{thm:penalised-saito} shows
that the resulting functional takes values in $[0,1]$, vanishes precisely on
arrangements free with exponents $(1,d_1,d_2)$, and is upper semicontinuous
on the reduced configuration space. Its positive values depend on the chosen
Hermitian normalisation and on the parameters $\lambda$ and $\beta$.
Accordingly, they should be interpreted as heuristic search energies, rather
than as projective invariants or intrinsic metric distances to the free
locus.

We incorporated numerical approximations of this functional into fixed-cardinality line-replacement searches, together with the auxiliary $b_2$ discrepancy in \eqref{eq:production-energy}. The different search engines share the same evaluator and exact certification pipeline. Numerical energies are used only to rank and explore candidates, while every reported free arrangement is verified over its exact coefficient field through the Saito identity \eqref{eq:results-exact-saito}.

At the audited snapshot, the certified collection contains representatives with $6{,}146$ distinct Weisfeiler--Leman fingerprints and cardinalities up to $n=28$. Of these, $3{,}012$ satisfy
\[
\epsilon(\mathcal{A})=d_1-m(\mathcal{A})\geq2
\]
and are therefore non-supersolvable. Lower-bound-extremal arrangements occur for $21$ distinct parameter pairs $(m,\epsilon)$, including
\[
(n,m,\epsilon)
=
(17,5,3),\quad
(19,5,4),\quad
(23,6,5),\quad
(25,6,6),\quad
(27,6,7).
\]
In particular, the collection reaches multiplicity gap $\epsilon=7$. These figures describe the output of a targeted search: they are neither frequency estimates nor a complete classification. The total fingerprint count is a conservative lower bound for the number of lattice-isomorphism types occurring among the discoveries.

The resulting large-gap arrangements provide exactly certified, non-supersolvable test cases in the regime highlighted by Dimca--K\"uhne--Pokora~\cite{Dimca2026freelinearrangementslow}. They supply only the free side of the comparison relevant to Terao's conjecture. Producing a counterexample would require a second, non-free realisation of one of the same intersection lattices, together with an exact proof of non-freeness. Future work will expand the exactly certified dataset. Progress in this direction depends both on available computational resources and on methodological choices, including the coefficient fields and proposal pools explored, and the effectiveness of the numerical optimiser and search strategies. A larger collection should make it possible to search systematically for patterns in the combinatorial data of free arrangements and to identify lattice properties associated with freeness, exponent distributions, and large multiplicity gaps. Particular attention will be given to the realisation spaces of the most promising lattices, with the aim of finding distinct realisations exhibiting different freeness behaviour and thereby producing a possible counterexample to Terao's conjecture.

\medskip
\bmhead{Acknowledgements}

The author thanks Henrique N. Sá Earp, Marcos B. Jardim, Abbas N. Nejad, Simone Marchesi, Felipe C. F. Monteiro, and Edward Hirst for helpful discussions during the early stages of this project. The author gratefully acknowledges the valuable feedback received during the \emph{DANGER: Data, Numbers and Geometry} workshop, held in Banff in April 2026. Special thanks are due to Ilkyoo Choi for several insightful and stimulating discussions. The author also thanks the two anonymous reviewers for their careful reading and constructive feedback. The author acknowledges support from the São Paulo Research Foundation (FAPESP), grant 2022/09891-4. This research utilised computational resources provided by the ``\emph{Centro Nacional de Processamento de Alto Desempenho em São Paulo} (CENAPAD-SP)''. The accompanying codebase was developed with the assistance of Claude Code (Fable 5), an AI coding tool developed by Anthropic.

\section*{Data and code availability}
Related data and code are publicly available at: \url{https://github.com/TomasSilva/FreeLineArrangements}










\bibliography{ref}

\end{document}

%% file: init.tex
\renewcommand{\det}{\mathop\mathrm{det}\nolimits}

\renewcommand{\epsilon}{\varepsilon}

\def\<{\mathopen{}\left<}
\def\>{\right>\mathclose{}}
\def\({\mathopen{}\left(}
\def\){\right)\mathclose{}}

\usepackage{multicol, color}

\definecolor{gold}{rgb}{0.85,.66,0}
\definecolor{cherry}{rgb}{0.9,.1,.2}
\definecolor{burgundy}{rgb}{0.8,.2,.2}
\definecolor{orangered}{rgb}{0.85,.3,0}
\definecolor{orange}{rgb}{0.85,.4,0}
\definecolor{olive}{rgb}{.45,.4,0}
\definecolor{lime}{rgb}{.6,.9,0}
\definecolor{green}{rgb}{.2,.7,0}
\definecolor{grey}{rgb}{.4,.4,.2}
\definecolor{brown}{rgb}{.4,.3,.1}

\newtheorem{theorem}{Theorem}

\newtheorem{corollary}{Corollary}
\newtheorem{lemma}{Lemma}

\theoremstyle{remark}
\newtheorem{remark}{Remark}

\theoremstyle{definition}
\newtheorem{definition}{Definition}
\newtheorem{example}{Example}